\theoremstyle{plain}
\newtheorem{thm}{Theorem}
\newtheorem{prop}{Proposition}
\newtheorem{cor}[prop]{Corollary}
\newtheorem{lm}[prop]{Lemma}
\theoremstyle{definition}
\newtheorem{definition}{Definition}
\newtheorem*{conclusion}{Conclusion}
\theoremstyle{remark}
\newtheorem{rmk}{Remark}
\newtheorem{example}{Example}
\newtheorem*{notation}{Notation}
\newtheorem*{acknowledgment}{Acknowledgment}
\def\OO{\mathcal{O}}
\def\HH{\mathcal{H}}
\def\II{\mathcal{I}}
\def\N{\mathcal{N}}
\def\C{\mathbb{C}}
\def\PP{\mathbb{P}}
\def\Z{\mathbb{Z}}
\def\T{\mathcal{T}}
\begin{document}
\title[Normal bundles of rational curves]{Irreducible Components of  Hilbert Schemes of Rational Curves  with given Normal Bundle}
\author{A. Alzati} 
\author{R. Re}
\address{Alberto Alzati,  Dipartimento di Matematica F.Enriques, Universit\`a di Milano,
via Saldini 50, 20133 Milano (Italy)}
\email{alberto.alzati@unimi.it}
\address{Riccardo Re, Dipartimento di Matematica e Informatica, Universit\`a di Catania, viale Andrea Doria 6, 95125 Catania (Italy)}
\email{riccardo@dmi.unict.it}
\keywords{Rational curve,
normal bundle, Hilbert scheme}
\subjclass[2010]{14C05, 14H45, 14N05}\noindent
\dedicatory{To Rosario Strano, on his 70's birthday}
\date{9 February, 2015}
\maketitle
\begin{abstract} We develop a new general method for computing the decomposition type of the normal bundle to a projective rational curve. This method is then used to detect and explain an example of a reducible Hilbert scheme parametrizing all the rational curves in $\PP^s$ with a given decomposition type of the normal bundle. We also characterize smooth non-degenerate rational curves contained in rational normal scrolls in terms of the splitting type of their restricted tangent bundles and compute their normal bundles. \end{abstract}
\section{Introduction}
The projective rational curves $C\subset \PP^s$ of degree $d$ form a  quasi-projective irreducible subscheme $\HH^{\rm{rat}}_{d,s}$ of the Hilbert scheme of $\PP^s$. Any of these curves is the image of a birational map $f:\PP^1\to\PP^s$, defined up a automorphism of $\PP^1$. If one restricts oneself to rational curves  with ordinary singularities,  one may classify these curves by considering the splitting types as a direct sum of line bundles of the vector bundles $f^\ast \T_{\PP^s}$ and $\N_f=f^\ast \T_{\PP^s}/\T_{\PP^1}$, commonly called the {\em restricted tangent bundle} and the {\em normal bundle} of the curve $C$, respectively. It is well known that the classification of rational curves by the splitting type of $f^\ast \T_{\PP^s}$  produces irreducible subvarieties of $\HH^{\rm{rat}}_{d,s}$, see \cite{verdier}, \cite{ramella}. One can look also at \cite{alzati-re} for a geometric characterization of rational curves with a given splitting of $f^\ast \T_{\PP^s}$ and \cite{iarrobino} for related results in the commutative algebra language. 

Since the early eighties of the past century a natural question about rational curves in projective spaces has been whether the subschemes of $\HH^{\rm{rat}}_{d,s}$ characterized by a given splitting of $\N_f$ are irreducible as well. This has been proved to be true for rational curves in $\PP^3$, see \cite{Eis-VdV1}, \cite{Eis-VdV2}, \cite{Gh-Sac}. The irreducibility problem has also been shown also to have a positive answer for the  general splitting type of $\N_f$, see \cite{sacchiero}, and more recently other results  related to this problem have been obtained in \cite{Ran} and \cite{bernardi}.
However the general irreducibility problem remained open.

In this paper we show that the irreducibility problem has negative answer in general, producing the first known example of a reducible Hilbert scheme of rational curves characterized by a given splitting of $\N_f$. In order to achieve this, we develop a new general method to compute the spaces of global sections $H^0\N_f(-k)$ and therefore the splitting type of $\N_f$. 
\subsection{Notations and summary of results.}
A rational curve $C\subset \PP^s$ is a curve that can be birationally parametrized by a regular map $f:\PP^1\to \PP^s$. We will always assume that $C$ is non-degenerate, i.e. not contained in any hyperplane $H\subset \PP^s$, and of degree $d>s$ with $s\geq 3$, in particular we are excluding the well known case of the rational normal curves.
Let $\mathcal{I}_C$ be the ideal sheaf of $C$ in $\PP^s$, then the normal sheaf of $C$ is the sheaf $\N_C=\underline{\rm{Hom}}_{\OO_C}(\mathcal{I}_C/\mathcal{I}^2_C,\OO_C)$. Recall also that the tangent sheaf of a noetherian scheme $X$ over $\rm{Spec}(\C)$ is defined as $\T_X=\underline{\rm{Hom}}_{\OO_X}(\Omega^1_{X/\C},\OO_X)$. Taking the differential of the parametrization map $f$ produces an exact sequence $0\to \T_{\PP^1}\stackrel{df}\to f^\ast \T_{\PP^s}\to f^\ast \N_C$. When $C$ has ordinary singularities, then $df$ is a vector bundle embedding and the sequence 
$$0\to \T_{\PP^1}\stackrel{df}\to f^\ast \T_{\PP^s}\to f^\ast \N_C\to 0$$ is exact and it identifies $f^\ast \N_C$ as the quotient bundle $f^\ast \T_{\PP^s}/df(\T_{\PP^1})$. We will denote $f^\ast \N_C=\N_f$ and call this vector bundle the normal bundle to $C$. Therefore we will assume that $C$ is irreducible and with ordinary singularities when we will be dealing with the normal bundle $\N_f$ associated to a given parametrization $f:\PP^1\to C$. 

Given a multiset of $s-1$ integers ${\overline{c}}=c_1,c_2,\ldots,c_{s-1}$, ordered so that  
$$c_1\geq c_2\geq\cdots\geq c_{s-1},$$  we will denote $\HH_{\overline{c}}$ the Hilbert scheme of irreducible degree $d$ rational curves with ordinary singularities $C\subset \PP^s$ that can be birationally parametrized by a map $f:\PP^1\to \PP^s$ such that the normal bundle $\N_f$ splits as $\N_f=\bigoplus_{i=1}^{s-1}\OO(c_i+d+2)$.

Let $U\cong\C^2$ be a two dimensional vector space and $\PP^1=\PP(U)$ its associated projective line. Let $S^dU$ be the $d$-th symmetric product of $U$. Let $\nu_d:\PP(U)\to \PP(S^dU)$ be the $d$-th Veronese embedding, and let us consider the rational normal curve $C_d=\nu_d(\PP(U))$.

Our main general result is Theorem \ref{thm:mainnow} of Section \ref{sec:mainthm}. After representing, up to projective transformations, a degree $d$ rational curve as the projection of $C_d$ from a vertex  $\PP(T)\subset \PP(S^dU)$, we prove an identification of the spaces of global sections $H^0\T_f(-d-2-k)$ and $H^0\N_f(-d-2-k)$  with the  spaces $\ker D\cap (S^kU\otimes T)\subset S^kU\otimes S^dU$ and $\ker D^2\cap (S^kU\otimes T)\subset S^kU\otimes S^dU$, respectively, where  $D$ is the first order transvectant  operator, i.e. $D=\partial_x\otimes\partial_y-\partial_y\otimes \partial_x$, with $x,y$ a basis of $U$ and $\partial_x,\partial_y$ is the dual basis, acting by derivation. By means of this result one can relate the splitting types of $\T_f$ and $\N_f$ with the position of the vertex $\PP(T)$ with respect to the rational normal curve $C_d$. 
 
 In section \ref{sec:counterexample} we introduce and discuss our example of a Hilbert scheme  $\HH_{\overline{c}}$  of rational curves $C\subset \PP^8$ of degree $d=11$ with exactly two irreducible components of dimension $98$ whose general points represent smooth rational curves, therefore providing a counterexample to the above  mentioned irreducibility problem.   
 
 In section \ref{sec:quadrics}, Theorem \ref{prop:quadricsclassif}, we give a characterization of smooth rational curves contained in rational normal scrolls in terms of the splitting type of their restricted tangent bundles and compute their normal bundles. The same theorem also shows how to construct these curves as projections of a rational normal curve.

\section{Rational curves as projections of the rational normal curve}\label{sec:notations} Given a $\C$-vector space $W$, we denote $\PP(W)$ the projective space of $1$-dimensional subspaces of $W$. More generally we denote $Gr(e+1,W)$ or $Gr(e,\PP(W))$ the Grassmannian of $(e+1)$-dimensional subspaces of $W$, or equivalently, of the $e$-dimensional linear subspaces of $\PP(W)$. If $T\subseteq W$ is a $e+1$ dimensional subspace we will denote $[T]$ or $[\PP(T)]$ its associated point in $Gr(e,\PP(W))$. Accordingly, if $w\in W$ is a non-zero vector, we will denote $[w]\in \PP(W)$ its associated point.
\subsection*{}\label{sec:dimensions}
Let $U\cong\C^2$ be a two dimensional vector space and $\PP^1=\PP(U)$ its associated projective line. Let $S^dU$ be the $d$-th symmetric product of $U$. We will denote $\nu_d:\PP^1\to \PP(S^dU)=\PP^d$ the $d$-th Veronese embedding, defined by $\nu_d(p)=[p^d]$. We will denote with $C_d=\nu_d(\PP^1)$, which is the rational normal curve given the set of pure tensors in $S^dU$. For any $b\geq 1$ we denote $Sec^{b-1}C_d$ the closure of the set of $[\tau]\in\PP(S^dU)$ such that  $\tau=p^d_1+\cdots+ p_b^d$, for $[p_i]\in C_d$ distinct points, i.e. the $(b-1)$-th  secant variety of $C_d$. 

\vskip1mm 
Let $C\subset \PP^s=\PP(V)$ be a non-degenerate rational curve of degree $d$. For the next considerations we will not need to assume that $C$ has ordinary singularities. The normalization map $\nu_C:\PP(U)\to C$ is the restriction of a map 
$f:\PP(U)\to \PP^s$ such that $f^\ast\OO_{\PP^s}(1)=\nu_C^\ast\OO_C(1)=\OO_{\PP^1}(d)$. The map $f$ is defined by an injection $f^\ast: H^0\OO_{\PP^s}(1)=V^\ast\hookrightarrow H^0\OO_{\PP^1}(d)=S^dU^\ast$ such that  $f^\ast(V^\ast)$ spans $\OO_{\PP^1}(d)$ at any point of $\PP^1$. Let us denote $$T=f^\ast(V^\ast)^\perp\subset S^dU,\quad e+1=\dim T=d-s.$$ Then one sees that the map $f^\ast$ can be identified with the dual of the map $S^dU\to S^dU/T\stackrel{\cong}\to V$. In particular, up to a linear isomorphism, we identify $\PP^s=\PP(S^dU/T)$ and the map $f$ as the composition $f=\pi_T\circ \nu_d$, where $\pi_T:\PP(S^dU)\dashrightarrow \PP(S^dU/T)$ is the projection of vertex $\PP(T)$. We want to underline the fact that for any $\overline{\psi}\in Aut(\PP^s)$, the curve $C'=\psi(C)$  is obtained by changing $f^\ast:V^\ast\to S^dU^\ast$ into  $g^\ast=f^\ast\circ \psi $ , with $\psi\in GL(V^\ast)$ a linear automorphism representing $\overline{\psi}$. Hence the space $T=f^\ast(V^\ast)^\perp$ is not affected by such a transformation. This means that one has a natural bijection between the set of orbits of maps $f:\PP^1\to\PP^s$ under the left action of $PGL(s+1)$ and the set of projection vertexes $\PP(T)$ obtained as above. 

We recall that the condition that $f^\ast(V^\ast)$ spans $\OO_{\PP1}(d)$ at any point of $\PP^1$ is equivalent to $\PP(T)\cap C_d=\emptyset $, and the fact that $f$ is birational to the image corresponds to the fact that $\PP(T)\cap Sec^1C_d$ is finite.

The discussion above shows that the Hilbert scheme $\HH^{\rm{rat}}_{d,s}$ of rational curves in $\PP^s$ is  set theoretically described as the set of images of rational maps $\pi_T\circ \nu_d$ composed with projective transformations of $\PP^s$, with the extra condition that the map $\pi_T\circ\nu_d:\PP^1\to \PP^s$ is birational to the image. More precisely, setting $\mathcal{V}$ the open subset of  $[T]\in Gr(e+1, S^dU))$ such that $\PP(T)\cap C_d=\emptyset$ and $\PP(T)\cap Sec^1T$ finite, we see that  there exists a map 
$$\mathcal{V}\times PGL(s+1)\to \HH^{\rm{rat}}_{d,s},$$ mapping $([T],\phi)\in \mathcal{V}\times PGL(s+1)$ to the curve $C=\phi(\pi_T(C_d))$ and  this map is surjective.  
\subsection{$PGL(2)$ action on the space of vertexes $\PP(T)$} Let us fix a map $f=\pi_T\circ\nu_d:\PP^1\to\PP^s$, associated to a vertex $\PP(T)$ as in the construction above. Let us consider and automorphism $\phi\in PGL(2)$. We will denote with the same letter $\phi$ also a fixed representative of the given automorphism as an element of $GL(2)$. One observes that the $d$-fold symmetric product $S^d\phi$  of the map $\phi$ acts on $S^dU$ by the action on generators $(S^d\phi)(l^d)=\phi(l)^d$  and one can define the induced action on the Grassmannian
$Gr(e+1,S^dU)$  by $[T]\mapsto [(S^d\phi)(T)]$. Now let us consider the composition 
$$f^{\phi}=f\circ\  \phi^{-1}:\PP^1\to\PP^s.$$ One has the following formula.
\begin{equation}\label{eq:PGL2action} f^{\phi}=\pi_{(S^d\phi)(T)}\circ \nu_d. \end{equation}
Indeed we know $f$ is determined by the subspace $T^\perp\subset S^dU^\ast$ and let us write $T^\perp=\langle g_0,\ldots,g_s\rangle$. Then $f^\phi$ is determined by $W= \langle g_0\circ\phi^{-1},\ldots,g_s\circ\phi^{-1}\rangle$, and by the $GL(2)$ invariance of the duality pairing $S^dU^\ast\otimes S^dU\to \C$, one immediately sees that $W=(S^d\phi)(T)^\perp\subset S^dU^\ast$. 

Above we saw that the space of maps $f:\PP^1\to \PP^s$ that birationally parametrize a non degenerate rational curve $C\subset\PP^s$ of degree $d$ is identified with $\mathcal{V}\times PGL(s+1)$, by mapping  $([T],\phi)$,  to $f=\phi\circ (\pi_T\circ\nu_d)$. 
Then we showed that the right $PGL(2)$ action on this space of maps can be identified with the left action of $PGL(2)$ on  $\mathcal{V}\times PGL(s+1)$ defined by its left action on $\mathcal{V}$.
\subsection{Irreducibility criteria and dimension formulas}
To show the irreducibility of a subscheme  $\HH_P\subseteq \HH^{\rm{rat}}_{d,s}$, defined by a geometrical property $P$ on rational curves $C\subset \PP^s$, it will be sufficient to prove the irreducibility of the subvariety $\mathcal{V}_P$ of those $[T]\in Gr(e+1,S^dU)$ such that the curve $C=\pi_T(C_d)$ satisfies property $P$. Indeed in that case $\mathcal{V}_P\times PGL(s+1)\to \HH_P$ is onto, with irreducible domain.
To be able to compute $\dim \HH_P$ from the map $\pi:\mathcal{V}_P\times PGL(s+1)\to \HH_P$ one applies the following result, which is almost obvious and very well known in the special case $\HH_P=\HH^{\rm{rat}}_{d,s}$, but that we will need in the following more general form.
\begin{prop}\label{prop:dimH} Under the notations set above,  if $\mathcal{V_P} $ is irreducible then $\HH_P$ is irreducible of dimension
$\dim \HH_P=\dim \mathcal{V}_P+\dim PGL(s+1)-3$.\end{prop}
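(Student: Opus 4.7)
The plan is to apply the theorem on dimensions of fibers of a dominant morphism to the surjective map
\[
\pi:\mathcal{V}_P\times PGL(s+1)\longrightarrow\HH_P,\qquad ([T],\phi)\mapsto\phi(\pi_T(C_d))
\]
constructed just before the statement. Irreducibility of $\HH_P$ is immediate: the source is the product of the irreducible varieties $\mathcal{V}_P$ (by hypothesis) and $PGL(s+1)$, and a morphism sends irreducible varieties to irreducible ones, so $\HH_P=\pi(\mathcal{V}_P\times PGL(s+1))$ is irreducible.

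For the dimension formula, it suffices to check that every fiber of $\pi$ has dimension exactly $3=\dim PGL(2)$. Given $C\in\HH_P$, a point $([T],\phi)$ of $\pi^{-1}(C)$ corresponds, after a choice of reference identifications $\PP(S^dU/T)\cong\PP^s$ varying with $T$, to a specific birational parametrization $f_{T,\phi}=\phi\circ\pi_T\circ\nu_d:\PP^1\to C$, and every birational parametrization of $C$ arises from a unique such pair. Since $C$ is an irreducible rational curve, the set of its birational parametrizations forms a $PGL(2)$-torsor under $f\mapsto f\circ\sigma^{-1}$, and therefore $\pi^{-1}(C)$ inherits the structure of a $PGL(2)$-torsor of dimension $3$. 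The explicit translation of this reparametrization action into an action on the source is provided by formula (\ref{eq:PGL2action}), which shows that $\sigma\in PGL(2)$ acts as $([T],\phi)\mapsto ([(S^d\sigma)(T)],\phi')$ for a suitable $\phi'=\phi'(\sigma,T,\phi)\in PGL(s+1)$; the orbits of this action are precisely the fibers of $\pi$.

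With fibers of constant dimension $3$, the fiber dimension theorem for a dominant morphism between irreducible varieties yields
\[
\dim\HH_P=\dim(\mathcal{V}_P\times PGL(s+1))-3=\dim\mathcal{V}_P+\dim PGL(s+1)-3,
\]
as asserted. The only genuine content in the argument is the identification of the fibers of $\pi$ with $PGL(2)$-torsors via equation (\ref{eq:PGL2action}); everything else is routine bookkeeping, which is why the statement is essentially obvious in the classical case $\HH_P=\HH^{\mathrm{rat}}_{d,s}$.
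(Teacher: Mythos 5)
Your proof is correct, but it reaches the fiber-dimension count by a genuinely different route than the paper. The paper writes the fiber as $\operatorname{Orb}([T])\times\operatorname{Stab}(C)$ and then verifies by explicit case analysis that the two factors' dimensions always sum to $3$: it classifies the one-dimensional subgroups of $PGL(2)$, shows that a vertex $[T]\in\mathcal{V}$ with $\dim\operatorname{Orb}([T])=2$ must be monomial, separately classifies the curves with $\dim\operatorname{Stab}(C)>0$ (the Klein--Lie or $W$-curves) as exactly the projections from monomial vertices, and checks $2+1=3$ in those cases and $3+0=3$ otherwise. You replace all of this with the orbit--stabilizer observation that the fiber is a $PGL(2)$-torsor, because it is in bijection with the set of birational parametrizations of $C$, on which $PGL(2)$ acts freely and transitively by precomposition. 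This is cleaner and makes automatic the one point the paper must verify by hand, namely that the degeneracies $\dim\operatorname{Orb}([T])<3$ and $\dim\operatorname{Stab}(C)>0$ occur simultaneously and compensate each other; what you lose is the explicit description of the exceptional curves, which is of independent interest but not needed for the statement. One step you assert without justification is the uniqueness of the pair $([T],\phi)$ attached to a parametrization $f$: that $T=f^\ast(V^\ast)^\perp$ is determined by $f$ is in the paper's setup, but you should also note that $\phi$ is then determined by its restriction to the non-degenerate curve $\pi_T(C_d)$ (which contains $s+2$ points in general position), and that the action on the fiber is free because $f\circ\sigma^{-1}=f$ forces $\sigma=\mathrm{id}$ for $f$ birational onto its image. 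With those two remarks added, your argument is complete.
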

\begin{proof} From the  above discussion it follows that the fiber over an arbitrary $[C]\in \HH_P$ is 
$$\pi^{-1}([C])=\operatorname{Orb}([T])\times \operatorname{Stab}(C),$$ with $\operatorname{Orb}([T])$ the orbit of $[T]$ under the action of $PGL(2)$ on the Grassmannian  $Gr(e+1,S^dU)$ and $\operatorname{Stab}(C)\subset PGL(s+1)$ the group of projective transformations preserving $C$.
First we consider the case when $\dim \operatorname{Orb}([T])<3=\dim PGL(2)$, i.e. when $[T]$ is fixed by some $1$-dimensional subgroup of $PGL(2)$.
The $1$-dimensional subgroups of $PGL(2)$ either fix one point $[x]\in \PP^1$ and contain the translations group acting on the basis $x,y$ as $(x,y)\mapsto (x,y+\alpha y)$, with $\alpha\in \C$, or fix two points $[x],[y]\in\PP^1$ and contain the group $(x,y)\mapsto (x,\lambda y)$, with $\lambda \in\C^\ast$. Any subspace $T\subset S^dU$ fixed by a group of the first type must contain the pure tensor $[x^d]$ and hence $[T]\not\in \mathcal{V}$. A space fixed by a subgroup of the second type is necessarily monomial, i.e. $T=\langle x^{\nu_0}y^{d-\nu_0},\ldots,x^{\nu_e}y^{d-\nu_e}\rangle$. One can see that such a space gives a point $[T]\in \mathcal{V}$, i.e. $\PP(T)\cap Sec^1C_d=\emptyset$ if and only if $d-2\geq \nu_0\geq \cdots\geq \nu_e\geq 2$, and hence it can exist if $d-3\geq e+1$. In this case one sees $\dim \operatorname{Orb}(T)=\dim PGL(2)-\dim \operatorname{Stab}(T)=2$.
\vskip2mm
Now we consider the cases when $\dim\operatorname{Stab}(C)>0$.  A classical reference for this class of curves, called the algebraic Klein Lie curves, or algebraic $W$-curves, is for example \cite{Enr-Chi}, libro V, \S 24.  Any one-dimensional subgroup of $PGL(s+1)$ whose orbits in $\PP^s$ are not lines, in a suitable coordinate system takes the form $t\mapsto\operatorname{diag}(t^{\mu_0},\ldots,t^{\mu_s})$, with $\mu_i\in \Z$ normalized and ordered so that $0= \mu_0\leq \cdots\leq\mu_s$. Its orbits $t\mapsto (\alpha_0t^{\mu_0}:\cdots: \alpha_st^{\mu_s})$ represent non-degenerate rational curves of degree $d$ if and only if the integers $\mu_i$ are distinct,  $\alpha_i\not=0$ for all $i=0,\ldots,s$ and $\mu_s=d$. Hence there exists only a finite numbers of possibilities of choices of such integers $\mu_0,\ldots,\mu_s$ for a fixed $d$, i.e. a finite number of non-degenerate degree $d$ Klein-Lie curves in $\PP^s$ up to projective equivalence. All of them can be obtained up to projective equivalence as projections $C=\pi_T(C_d)$ in the following way. For any fixed basis $x,y\in U$ consider the vertex $\PP(T)$ generated by monomials $x^{\nu_0}y^{d-\nu_0},\ldots,x^{\nu_e}y^{d-\nu_e}$, with $e+1=d-s$ and $\{\nu_0,\ldots,\nu_e\}=\{0,\ldots,d\}\setminus\{\mu_0,\ldots,\mu_s\}$, then $C=\pi_T(C_d)$ is a curve parametrized as $t\mapsto(t^{\mu_0}:\cdots:t^{\mu_s})$ with respect to the basis $(\bar{x}^{\mu_i}\bar{y}^{d-\mu_i})$ of $S^dU/T$. 
Hence we have found that non degenerate rational curves with $\dim\operatorname{Stab}(C)>0$ come from those vertexes $\PP(T)$ with $\dim\operatorname{Orb}([T])=2$ that were already analyzed above. In all those cases one has
$$\dim \pi^{-1}([C])=\dim (\operatorname{Orb}(T)\times\operatorname{Stab}(C))=2+1=3.$$ In any other case one has $\dim \operatorname{Orb}(T)=3$ and $\dim\operatorname{Stab}(C)=0$.
\end{proof}
\subsection{A classification of the projection vertexes $\PP(T)$.} Let us consider a non zero subspace $T\subseteq S^dU$, with $d\geq 2$. Let us also denote $x,y$ a basis of $U$ and $u,v$ the dual basis in $U^\ast$. Recall that $u,v$ may be identified with $\partial_x,\partial_y$ acting as linear forms on $U$, and a arbitrary element $\omega\in U^\ast$ will be written $\omega=\alpha \partial_x+\beta\partial_y$, for suitable $\alpha,\beta\in \C$. 
We define  
\begin{equation}\label{eq:deltaT}\partial T=\langle \omega(T)\ |\ \omega\in U^\ast\rangle.\end{equation}
We remark that if $U=\langle x,y\rangle$ then $\partial T=\partial_x T+\partial_y T$. One observes that in the trivial case $T=S^dU$ then  $\partial T=S^{d-1}U$. One can see that this is the only case when $\dim \partial T<\dim T$, either as an easy exercise or as a consequence of Proposition  \ref{thm:main1} below. 

We also introduce the space $\partial^{-1}T\subset S^{d+1}U$ defined in the following way.
\begin{equation}\label{eq:delta-1T}\partial^{-1} T=\bigcap_{\omega\in U^\ast}\omega^{-1}(T).\end{equation}
In this case we have $\partial^{-1}T=\partial_x^{-1}T\cap\partial_y^{-1}T$. Of course one has $\partial^{-1}S^dU=S^{d+1}U$.

 For $g\in S^{d+b}U$ we introduce the vector space
\begin{equation}\label{eq:deltaetau}\partial^b(g)=\langle\partial_x^bg,\partial_x^{b-1}\partial_yg,\ldots,\partial_y^bg\rangle\subseteq S^dU.\end{equation} Conventionally we set $\partial^b(g)=0$ if $b=-1$. 
\subsection{The numerical type of a subspace $T\subset S^dU$}\label{sec:numtype} 
We will need the following notations and results from the article \cite{alzati-re}.
\begin{definition}\label{def:secant} We will say that a proper linear space $\PP(S)\subset\PP^d$ is $C_d$-generated  if
 $\PP(S)$ is generated by its schematic intersection with $C_d$. Setting  $a+1=\dim S$, we will also say in this case that $\PP(S)$ is $(a+1)$-secant to $C_d$.  We will say that a vector subspace $S\subseteq S^d U$ is $C_d$-generated if $\PP(S)$ is $C_d$-generated. \end{definition}

\begin{notation} Given a proper subspace $T\subset S^dU$, we denote $S_T$ the smallest subspace containing the schematic intersection
$\PP(T)\cap C_d$ as a subscheme. We set $a=\dim S_T-1=\dim \PP(S_T)$, with the convention that $\dim\emptyset=-1$. \end{notation}
\begin{prop}[Theorem 1 of \cite{alzati-re}]\label{thm:main1} Let $T$ be a proper subspace $T\subseteq S^dU$. Let $S_T$ be as defined above. Then $\dim \partial S_T=\dim S_T$. Moreover if we define $r = \dim\partial T - \dim(T)$ then $r\geq 0$ and either $r=0$ and in this case one has $T=S_T$ and $T$ is $C_d$-generated, or $r\geq 1$ and there exist forms $ f_1,\ldots,f_r$, with $f_i\in \PP^{d+b_i}\setminus Sec^{b_i}C_{d+b_i}$ for $i=1,\ldots,r$,  with $b_1\geq \cdots\geq b_r\geq 0$, such that $T$ and $\partial T$ are the direct sums \[\begin{array}{l}T=S_T\oplus\partial^{b_1}(f_1)\oplus\cdots\oplus\partial^{b_r}(f_r)\\

\partial T=\partial S\oplus\partial^{b_1+1}(f_1)\oplus\cdots\oplus\partial^{b_r+1}(f_r).\end{array}\]
The $(r+1)$-uple $(a,b_1,\ldots,b_r)$ is uniquely determined from $T$. A space $T$ as above exists if and only if $a\geq -1$, $b_i\geq 0$ for all $i=1,\ldots,r$ and $a+1+\sum (b_i+2)\leq d.$ \end{prop}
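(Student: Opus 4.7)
The plan is to argue by induction on $\dim T$, peeling off one summand $\partial^{b}(f)$ at a time until the $C_d$-generated core $S_T$ remains. The key technical ingredient is the apolarity/Sylvester correspondence: for $f\in S^{d+b}U$ the contraction $S^{b+1}U^\ast\to S^{d-1}U$, $g\mapsto g\cdot f$, is injective (i.e.\ $\dim\partial^{b+1}(f)=b+2$) if and only if $f\notin Sec^b C_{d+b}$; and a fortiori $\dim\partial^b(f)=b+1$ holds under the same hypothesis. This will ensure that each summand $\partial^{b_i}(f_i)$ contributes exactly $+1$ to $r$.

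First I would establish the inequality $r=\dim\partial T-\dim T\geq 0$ for any proper $T\subsetneq S^d U$. Observe that for nonzero $\omega=\alpha\partial_x+\beta\partial_y\in U^\ast$, the operator $\omega$ acting on $S^d U$ has one-dimensional kernel, spanned by $l^d$ where $[l]\in\PP(U)$ is the zero of $\omega$. Hence $\dim\omega(T)\geq\dim T-1$ in general, and choosing $\omega$ so that $l^d\notin T$, which is possible because $T\subsetneq S^d U$ and pure powers span $S^d U$, yields $\dim\partial T\geq\dim\omega(T)=\dim T$. The equality $\dim\partial S_T=\dim S_T$ then follows from the explicit description of $S_T$: in the reduced case $S_T=\langle l_1^d,\ldots,l_{a+1}^d\rangle$ for distinct $[l_i]$ so that $\partial S_T=\langle l_1^{d-1},\ldots,l_{a+1}^{d-1}\rangle$ has the same dimension (since $a+1\leq d$), and the non-reduced case is handled analogously by the osculating flag of $C_d$.

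For the decomposition when $T\supsetneq S_T$, I would let $b_1$ be the largest integer for which some $f_1\in S^{d+b_1}U\setminus Sec^{b_1}C_{d+b_1}$ satisfies $\partial^{b_1}(f_1)\subseteq T$; such $b_1$ exists, because otherwise an apolarity argument would force $T\subseteq S_T$. The main claim is then that there is a complementary subspace $T'\subset T$ with $T=T'\oplus\partial^{b_1}(f_1)$, $S_{T'}=S_T$, and simultaneously $\partial T=\partial T'\oplus\partial^{b_1+1}(f_1)$. Granted this, induction applied to $T'\subsetneq T$ produces the remaining summands $\partial^{b_i}(f_i)$, with $b_1\geq b_2\geq\cdots\geq b_r\geq 0$ coming from the maximality of each $b_i$ in its respective step. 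I expect this direct-sum construction to be the main obstacle: it is exactly where the hypothesis $f_1\notin Sec^{b_1}C_{d+b_1}$ enters, guaranteeing $\dim\partial^{b_1}(f_1)=b_1+1$ and $\dim\partial^{b_1+1}(f_1)=b_1+2$, so that the summand $\partial^{b_1}(f_1)$ contributes exactly $+1$ to $r$ and overlaps neither with $S_T$ nor with the summands produced by induction.

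Finally, uniqueness of the multiset $(a,b_1,\ldots,b_r)$ follows because $a=\dim S_T-1$ is intrinsic and each $b_i$ can be recovered inductively from the filtration of $T$ by successive applications of $\partial^{-1}$ (equivalently, from the growth rates of $\dim\partial^k T$ in $k$). The numerical existence condition $a+1+\sum_{i=1}^r(b_i+2)\leq d$ is precisely the inequality $\dim\partial T\leq\dim S^{d-1}U=d$ applied to the direct-sum expression for $\partial T$, while $a\geq -1$ and $b_i\geq 0$ are built into the definitions. Conversely, for any numerical data satisfying these constraints one explicitly constructs a $T$ with the given invariants by choosing $S_T$ as the span of any $(a+1)$ distinct points on $C_d$ and picking $f_i\in S^{d+b_i}U$ generic; genericity places each $f_i$ outside the corresponding $Sec^{b_i}C_{d+b_i}$ and the sum remains direct for dimension reasons.
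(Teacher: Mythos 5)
A preliminary remark: the paper does not prove this proposition at all --- it is imported verbatim as Theorem 1 of \cite{alzati-re}, so there is no internal proof to measure your argument against. Judged on its own terms, your outline handles the peripheral parts correctly: the observation that $\ker(\omega|_{S^dU})=\langle l^d\rangle$ with $[l]$ the zero of $\omega$, hence $\dim\partial T\geq\dim T$ for proper $T$; the identification $\dim\partial S_T=\dim S_T$ via distinct powers (and osculating spaces in the non-reduced case); the apolarity criterion tying $f\notin Sec^{b}C_{d+b}$ to the injectivity of $S^{b+1}U^\ast\to S^{d-1}U$; and the bookkeeping showing that each summand $\partial^{b_i}(f_i)$ contributes exactly $+1$ to $r$ while $\dim\partial T\leq\dim S^{d-1}U=d$ yields the constraint $a+1+\sum(b_i+2)\leq d$.

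The genuine gap is the one you flag yourself: the claim that $T=T'\oplus\partial^{b_1}(f_1)$ with $S_{T'}=S_T$ and \emph{simultaneously} $\partial T=\partial T'\oplus\partial^{b_1+1}(f_1)$. This is not a deferrable verification; it is the entire content of the structure theorem, and without it nothing else in your argument stands. At least three separate points need proof there: (a) that a witness $f_1$ for the maximal $b_1$ can be chosen outside $Sec^{b_1}C_{d+b_1}$ rather than merely outside $Sec^{b_1-1}C_{d+b_1}$ (the weaker condition is all that $\dim\partial^{b_1}(f_1)=b_1+1$ requires); (b) that $\partial^{b_1}(f_1)\cap S_T=0$, so that a complement $T'$ containing $S_T$ exists at all; and, most delicately, (c) that $T'$ can be chosen so that $\partial T'\cap\partial^{b_1+1}(f_1)=0$ --- nothing in the maximality of $b_1$ visibly prevents $\partial$ of the complement from colliding with $\partial^{b_1+1}(f_1)$, and any such collision would break the count $r=\#\{\mbox{summands}\}$ and with it the dichotomy $r=0\iff T=S_T$. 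Likewise, your assertion that the nonexistence of an admissible $b_1$ forces $T\subseteq S_T$ ``by an apolarity argument'' is stated rather than argued, and the uniqueness of $(a,b_1,\ldots,b_r)$ via iterated $\partial^{-1}$ presupposes the very decomposition in question. As written, the proof rests on an unproven central claim; you would need to supply it or follow the original argument of \cite{alzati-re}.
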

\begin{definition}\label{def:type} We say that a subspace $T$ as in Proposition  \ref{thm:main1} has {\em numerical type} $(a,b_1,\ldots,b_r)$. If $S_T=0$, i.e. $\PP(T)\cap C_d=\emptyset$ then $a=-1$ and we will say $T$ has type $(b_1,\ldots,b_r)$. \end{definition}
Let us also recall the following other result from \cite{alzati-re}.
\begin{prop}[Proposition 5 of \cite{alzati-re}]\label{prop:partial-1T} Assume that $T\subseteq S^dU$ has type $(a,b_1,\ldots,b_r)$ so that it has a decomposition $$T=S_T\oplus\bigoplus_{i=1}^r \partial^{b_i}(f_i)$$ satisfying the requirements of Proposition \ref{thm:main1}. Then $\partial^{-1}(S_T)=S_{\partial^{-1}T}$, $\dim \partial^{-1}(S_T)=\dim S_T=a+1$ and there exists a decomposition $$\partial^{-1}T=\partial^{-1}S_T\oplus \bigoplus_{i:b_i\geq 1} \partial^{b_i-1}(f_i).$$ In particular $\partial^{-1}T$ has type $(a,b_1-1,\ldots,b_{r_1}-1)$, with $r_1=\max(i:\ b_i\geq 1)$.\end{prop}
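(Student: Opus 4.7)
The plan is to prove the three assertions in order: the identification $\partial^{-1}S_T=S_{\partial^{-1}T}$ together with the dimension equality, the direct sum decomposition of $\partial^{-1}T$, and the resulting numerical type. The main tool is the structural decomposition of both $T$ and $\partial T$ supplied by Proposition \ref{thm:main1}, which I transport across $\partial_x,\partial_y$ using the compatibility $\partial_y\partial_xg=\partial_x\partial_yg$ and the Euler identity $(d+1)g=x\partial_xg+y\partial_yg$.

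I would start with the piece supported on the curve. In the reduced case $S_T=\langle p_1^d,\ldots,p_{a+1}^d\rangle$, the compatibility condition combined with the linear independence of the $p_j^{d-1}$ forces any $g\in\partial^{-1}S_T$ to take the form $\sum\lambda_jp_j^{d+1}$, giving $\partial^{-1}S_T=\langle p_1^{d+1},\ldots,p_{a+1}^{d+1}\rangle$ of dimension $a+1$. To obtain $\partial^{-1}S_T=S_{\partial^{-1}T}$, I use that if $p^{d+1}\in\partial^{-1}T$ then both partials of $p^{d+1}$ are nonzero multiples of $p^d$, so $p^d\in T$ and thus $[p^d]\in\PP(S_T)\cap C_d$; the non-reduced situation extends by the same Euler computation applied to osculating jets at the supports.

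The inclusions $\partial^{b_i-1}(f_i)\subseteq\partial^{-1}(\partial^{b_i}(f_i))\subseteq\partial^{-1}T$ for $b_i\geq 1$ are automatic, and the candidate sum $\partial^{-1}S_T+\sum_{b_i\geq 1}\partial^{b_i-1}(f_i)$ is direct because applying $\partial_x$ (or $\partial_y$) to a supposed relation lands inside the direct sum decomposition of $T$, and a polynomial in $S^{d+1}U$ whose partials vanish is zero. The reverse inclusion is the heart of the argument. Given $g\in\partial^{-1}T$, I write $\partial_xg=s+\sum P_i(f_i)$ and $\partial_yg=s'+\sum Q_i(f_i)$ with $s,s'\in S_T$ and $P_i,Q_i\in S^{b_i}U^\ast$; the evaluation maps $S^{b_i}U^\ast\to\partial^{b_i}(f_i)$ and $S^{b_i+1}U^\ast\to\partial^{b_i+1}(f_i)$ are isomorphisms by the dimension counts of Proposition \ref{thm:main1}. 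The commutativity $\partial_y\partial_xg=\partial_x\partial_yg$, combined with the direct sum decomposition of $\partial T$, descends to componentwise identities $\partial_yP_i=\partial_xQ_i$ in $S^{b_i+1}U^\ast$. A routine calculation in the polynomial ring $\C[\partial_x,\partial_y]$ then yields a unique $R_i\in S^{b_i-1}U^\ast$ satisfying $P_i=\partial_xR_i$, $Q_i=\partial_yR_i$ when $b_i\geq 1$, and forces $P_i=Q_i=0$ when $b_i=0$. Setting $g_i=R_i(f_i)\in\partial^{b_i-1}(f_i)$ and picking $g_0\in\partial^{-1}S_T$ with $\partial_xg_0=s,\partial_yg_0=s'$ from the first step, the difference $g-g_0-\sum g_i$ has vanishing partials and therefore vanishes. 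The final assertion on the numerical type is a matter of reading off the decomposition, since $S_{\partial^{-1}T}=\partial^{-1}S_T$ has dimension $a+1$ and the $r_1$ remaining summands carry the shifted parameters $b_i-1$.

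The hard part is the transition in the reverse inclusion from the identity of values $(\partial_yP_i)(f_i)=(\partial_xQ_i)(f_i)$ in $S^{d-1}U$ to the identity of operators $\partial_yP_i=\partial_xQ_i$ in $S^{b_i+1}U^\ast$; this relies on the injectivity of evaluation at $f_i$, which comes precisely from the condition $f_i\notin Sec^{b_i}C_{d+b_i}$ and the dimension count $\dim\partial^{b_i+1}(f_i)=b_i+2$ furnished by Proposition \ref{thm:main1}. A secondary technicality is the scheme-theoretic equality $S_{\partial^{-1}T}=\partial^{-1}S_T$ in the non-reduced case, handled by the same Euler arithmetic applied to osculating flags at the supports.
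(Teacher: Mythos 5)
The paper does not actually prove this statement: it is imported verbatim as Proposition 5 of \cite{alzati-re}, so there is no in-text proof to compare against. Judged on its own, your argument is correct and is the natural apolarity proof one would expect. The key mechanisms are all in place: the evaluation maps $S^{b_i}U^\ast\to\partial^{b_i}(f_i)$ and $S^{b_i+1}U^\ast\to\partial^{b_i+1}(f_i)$ are isomorphisms precisely because Proposition \ref{thm:main1} forces $\dim\partial^{b_i}(f_i)=b_i+1$ and $\dim\partial^{b_i+1}(f_i)=b_i+2$; the directness of the decomposition of $\partial T$ lets you split the integrability condition $\partial_y\partial_xg=\partial_x\partial_yg$ componentwise; the factorization $P_i=\partial_xR_i$, $Q_i=\partial_yR_i$ from $\partial_yP_i=\partial_xQ_i$ in $\C[\partial_x,\partial_y]$ (and $P_i=Q_i=0$ when $b_i=0$) is the right symbol-level Poincar\'e lemma; and the Euler identity $(d+1)g_0=xs+ys'$ produces the antiderivative in $\partial^{-1}S_T$. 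Two spots deserve one more line each rather than a wave of the hand: (a) the non-reduced case of $S_T$, which is cleanest if you note that a $C_d$-generated space is itself a direct sum of osculating pieces $\partial^{m_j}(p_j^{d+m_j})$, so it is handled by exactly the same computation as the $f_i$-summands; and (b) the final "reading off" of the type, which requires checking that $\partial(\partial^{-1}T)$ again splits as $\partial\partial^{-1}S_T\oplus\bigoplus_{i:b_i\geq1}\partial^{b_i}(f_i)$ (immediate from the directness inside $T$) together with $f_i\notin Sec^{b_i-1}C_{d+b_i}$, so that the uniqueness clause of Proposition \ref{thm:main1} applies. Neither point is a gap; the proof stands.
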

\subsection{The splitting type of the restricted tangent bundle of rational curves.}
The main result of \cite{alzati-re} about the splitting type of the restricted tangent bundle $f^\ast \T_{\PP^s}$, that we will write shortly as $\T_f$, of a parametrized rational curve $f:\PP^1\to \PP^s$ is the following.
\begin{prop}[Theorem 3 of \cite{alzati-re}]\label{prop:tangristr} Assume that $f:\PP^1\to\PP^s$ is obtained by the projecting the rational normal curve $C_d$ from a vertex $\PP(T)$ with $T$ of type $(b_1,\ldots,b_r)$. Then $r\leq s$ and the splitting type of $\T_f$ is $$\T_f=\OO_{\PP^1}(b_1+d+2)\oplus\cdots \oplus\OO_{\PP^1}(b_r+d+2)\oplus\OO_{\PP^1}^{s-r}(d+1).$$ \end{prop}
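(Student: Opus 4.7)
The plan is to compute $h^0(\T_f(-n))$ for $n\geq d+2$ and then read off the splitting type of $\T_f$ via Grothendieck's theorem on $\PP^1$. Pulling back the Euler sequence on $\PP^s$ by $f$ gives
$$0\to \OO_{\PP^1}\to \OO_{\PP^1}(d)^{s+1}\to \T_f\to 0,$$
whose first map is the tuple $(g_0,\ldots,g_s)$ of a basis of $T^\perp=f^\ast(V^\ast)\subset S^dU^\ast$. Twisting by $\OO(-d-k-2)$ with $k\geq 0$, taking the long exact sequence in cohomology, and using Serre duality $H^1(\OO_{\PP^1}(-n-2))\cong S^nU$, one obtains
$$H^0(\T_f(-d-k-2))=\ker\bigl(S^{d+k}U\longrightarrow (S^kU)^{s+1}\bigr),$$
where the map is the transpose of the multiplication map $(S^kU^\ast)^{s+1}\to S^{d+k}U^\ast$, $(\phi_i)\mapsto \sum g_i\phi_i$. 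Explicitly it is $F\mapsto (g_0(F),\ldots,g_s(F))$, with $g_i(F)\in S^kU$ the contraction of $F$ by the differential operator $g_i\in S^dU^\ast$.

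Since the $g_i$ span $T^\perp$, this kernel equals $\{F\in S^{d+k}U : g(F)=0 \text{ for all } g\in T^\perp\}$. By the standard duality between the contraction $g\mapsto g(F)$ as a map $S^dU^\ast\to S^kU$ and the map $\phi\mapsto \phi(F)$, $S^kU^\ast\to S^dU$ (whose image is precisely $\partial^k(F)$), the defining condition is equivalent to $\partial^k(F)\subseteq T$. Iterating the definition \eqref{eq:delta-1T}, we identify
$$H^0(\T_f(-d-k-2))\;\cong\;\partial^{-k}T=\{F\in S^{d+k}U : \partial^k(F)\subseteq T\}.$$
Now I apply Proposition \ref{prop:partial-1T} inductively: since $\PP(T)\cap C_d=\emptyset$ forces $a=-1$, and Proposition \ref{prop:partial-1T} preserves this property at every step, each application of $\partial^{-1}$ lowers each $b_i$ by $1$ and discards those that become negative. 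Thus $\partial^{-k}T$ has type $(b_1-k,\ldots,b_{r_k}-k)$ with $r_k=\#\{i:b_i\geq k\}$, and
$$h^0(\T_f(-d-k-2))=\dim\partial^{-k}T=\sum_{i=1}^{r}\max(0,\,b_i-k+1).$$

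Finally, writing $\T_f=\bigoplus_{j=1}^s\OO_{\PP^1}(a_j)$ with $a_1\geq\cdots\geq a_s$, the identity $h^0(\T_f(-n))-h^0(\T_f(-n-1))=\#\{j:a_j\geq n\}$ applied at $n=d+k+2$ shows that for each $k\geq 0$ exactly $\#\{i:b_i\geq k\}$ of the $a_j$'s are at least $d+k+2$. This forces $a_i=b_i+d+2$ for $i=1,\ldots,r$ and $a_j\leq d+1$ for $j>r$. Since $\sum a_j=\deg\T_f=d(s+1)$, the remaining indices satisfy $\sum_{j>r}a_j=(s-r)(d+1)$, which together with the bound $a_j\leq d+1$ forces $a_{r+1}=\cdots=a_s=d+1$. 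The main obstacle is the identification $H^0(\T_f(-d-k-2))\cong\partial^{-k}T$, where Serre duality and the adjunction between the two contraction maps must be handled carefully; once this is in place, the rest reduces to the type calculus already developed in Proposition \ref{prop:partial-1T}.
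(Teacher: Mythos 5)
This proposition is imported from \cite{alzati-re} and the present paper gives no proof of it, so there is nothing in-paper to compare against line by line; your argument is correct and follows precisely the route the authors themselves indicate, namely deriving $h^0\T_f(-d-2-k)=\dim\partial^{-k}T$ from the restricted Euler sequence (the identity they cite from Section 6.2 of \cite{alzati-re} inside the proof of Theorem \ref{thm:mainnow}) and then reading off the splitting type via the iterated application of Proposition \ref{prop:partial-1T} together with the degree count $\deg\T_f=(s+1)d$. All the individual steps (the Serre-duality identification of the $H^1$ maps as contractions, the equivalence $g(F)=0\ \forall g\in T^\perp \iff \partial^k(F)\subseteq T$, and the difference/degree bookkeeping) check out.
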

We also recall the restricted Euler sequence $$0\to \OO_{\PP^1}\to (S^dU/T)\otimes \OO_{\PP^1}(d)\to  \T_f\to 0,$$ from which one gets
$\deg \T_f=(s+1)d$.
\section{Review of some $SL(U)$ invariant operators.}\label{sec:tensors}
In this section we will review some well known invariant operators between spaces of tensors  on $U$ or $U^\ast$, for convenience of the reader and for later reference. Invariance will mean $GL(U)$ or $SL(U)$ invariance.  
\subsection{The duality pairing} It is the natural pairing $S^dU^\ast\otimes S^dU\to \C$ that identifies either of the two spaces as the dual of the other. It may be defined considering any element of $S^dU^\ast$ as a differential operator on $S^dU$. More precisely if $x,y\in U$ and $u,v\in U^\ast$ are dual bases, then one has the formula $$f(u,v)\in S^dU^\ast,\ l=\lambda x+\mu y\in U\Rightarrow f(l^d)=d!f(\lambda,\mu).$$
\subsection{General contractions} The contraction maps $$S^kU^\ast\otimes S^bU\to S^{b-k}U,$$ defined for any $0\leq k\leq b$ or the analogous maps interchanging $U$ and $U^\ast$ can be interpreted similarly as above by letting the tensors in $S^kU^\ast$ act on $S^bU$ as differential operators.  The following formulas are straightforward consequences of the definition of the action of $f\in S^kU^\ast$ as a differential operator.
\begin{eqnarray}\label{eq:contraction}f(l^b)&=&{b\choose k}f(l^k)l^{b-k}.\\
\label{eq:contrderiv} f(\eta(g))&=&(\eta f)(g),\ \forall f\in S^kU^\ast,\ \forall \eta\in U^\ast,\ \forall g\in S^{b+1}U.
\end{eqnarray}
\subsection{The multiplication maps} These are the maps
 $m:S^iU\otimes S^jU\to S^{i+j}U$, or the same with $U^\ast$ in the place of $U$, defined on pure generators by $m(l^i\otimes h^j)= l^ih^j$.
\subsection{The polarization maps} These are maps $p_{k}:S^{d+k}U\to S^kU\otimes S^dU$ proportional to  duals of the multiplication maps $m:S^kU^\ast\otimes S^dU^\ast\to S^{d+k}U^\ast$, with proportionality factor determined so that $m(p_{k}(f))=f$ for any $f\in S^{d+k}U$.  For this reason, the polarization maps are always injective. The maps $p_{k}$ are uniquely defined by 
$$p_{k}(l^{d+k})=l^k\otimes l^d.$$
One has the following well-known closed formula for $p_k$ in terms of a fixed basis $x,y$ for $U$:
\begin{equation}\label{eq:pkformula}p_k(f)=\frac{(\deg{f}-k)!}{\deg{f}!}\sum_{i=0}^k{k\choose i}x^{k-i}y^i\otimes\partial_x^{k-i}\partial_y^i(f).\end{equation} 
\subsection{The multiplication by  $\xi=x\otimes y- y\otimes x$}  This is a $SL(U)$-invariant element of $U\otimes U$, that indeed generates the  irreducible subrepresentation of $GL(U)$ given by $U\wedge U\subset U\otimes U$. The multiplication by $\xi$ acts in the following way:
$$\xi:S^{i-1}U\otimes S^{j-1}U\to S^iU\otimes S^jU.$$
Observe that for any $k\leq d$ one has the direct sum decomposition
\begin{equation}\label{eq:pieri} S^kU\otimes S^dU=p_{k}(S^{d+k}U)\oplus\xi p_{k-1}(S^{d+k-2}U)\oplus\cdots\oplus \xi^kp_{0}(S^{d-2k}U).
\end{equation}
Here we set $S^iU=0$ if $i<0$. The decomposition above is indeed equal to the {\em Pieri decomposition} of $S^kU\otimes S^dU$ as a $GL(U)$ representation, for which we refer to \cite{fulton-harris}. Note that grouping the terms in (\ref{eq:pieri}) in a suitable way one obtains \begin{eqnarray}S^kU\otimes S^dU&=&p_{k}(S^{d+k}U)\oplus\xi (S^{k-1}U\otimes S^{d-1}U),\label{eq:decompD}
\\
S^kU\otimes S^dU&=&p_{k}(S^{d+k}U)\oplus\xi p_{k-1}(S^{d+k-2}U)\oplus \xi^2 (S^{k-2}U\otimes S^{d-2}U)\label{eq:decompD^2}.
\end{eqnarray}

 \subsection{The operator $D=D_{x,y}=\partial_x\otimes \partial_y-\partial_y\otimes\partial_x$} It is classically know as the {\em first order transvectant}, see e.g. \cite{olver}, Definition 5.2. If $(x',y')=(x,y)A$ is a new basis for $U$, then the operator $D$ transforms as $D_{x',y'}=(\det A)^{-1}D_{x,y}$, see \cite{olver}, formula (5.3). In particular $D$ is invariant with respect to the $SL(U)$ representation on $U^\ast\otimes U^\ast$. In this article we will consider the following actions of $D$ as a differential operator:
 $$D:S^kU\otimes S^dU\to S^{k-1}U\otimes S^{d-1}U.$$ 
  The operator $D$ satisfies the following property.
 \begin{lm}\label{lm:Dprop} For any $\tau\in S^{k-1}U\otimes S^{d-1}U$ one has $$D(\xi \tau)=(d+k)\tau+\xi D(\tau).$$
 Moreover one has $D(p_k(f))=0$ for any $f\in S^{d+k}U$. \end{lm}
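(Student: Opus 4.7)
The plan is to verify both identities by direct computation, reducing to pure tensors in the domain of each operator.

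For the first identity, I would fix $\tau = a\otimes b$ with $a\in S^{k-1}U$ and $b\in S^{d-1}U$, extending by linearity at the end. Expanding using the definition of $\xi$ gives
\[
\xi\tau = xa\otimes yb - ya\otimes xb,
\]
and applying $D=\partial_x\otimes\partial_y-\partial_y\otimes\partial_x$ to each summand with the Leibniz rule $\partial_x(xa)=a+x\partial_xa$, $\partial_y(yb)=b+y\partial_yb$, etc., produces eight terms. Three of them (those in which exactly zero or one factor escapes a derivative of $a$ or $b$) combine as
\[
2\,a\otimes b + a\otimes (x\partial_x+y\partial_y)b + (x\partial_x+y\partial_y)a\otimes b,
\]
and the Euler identity $x\partial_x+y\partial_y=\deg$ collapses this to $(2+(d-1)+(k-1))a\otimes b=(d+k)\,\tau$.

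The remaining four terms all contain second derivatives and admit the regrouping
\[
x\partial_xa\otimes y\partial_yb - y\partial_xa\otimes x\partial_yb -x\partial_ya\otimes y\partial_xb+ y\partial_ya\otimes x\partial_xb = \xi\bigl(\partial_xa\otimes\partial_yb-\partial_ya\otimes\partial_xb\bigr),
\]
where I simply recognize the pattern $(x\otimes y-y\otimes x)\cdot(\cdot\otimes\cdot)$ in each of the two pairs. The right-hand side equals $\xi D(\tau)$, which gives the desired formula after extending by linearity.

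For the second identity, since pure powers $l^{d+k}$ with $l\in U$ span $S^{d+k}U$ (standard fact for binary forms), by linearity of $p_k$ and $D$ it suffices to compute on $p_k(l^{d+k})=l^k\otimes l^d$. Writing $l=\alpha x+\beta y$, one has $\partial_x l^k = k\alpha\, l^{k-1}$, $\partial_y l^d = d\beta\, l^{d-1}$, and symmetrically with $x\leftrightarrow y$, so
\[
D(l^k\otimes l^d) = kd\,\alpha\beta\, l^{k-1}\otimes l^{d-1} - kd\,\beta\alpha\, l^{k-1}\otimes l^{d-1} = 0.
\]
The only mildly delicate point is the bookkeeping in the first identity; there is no conceptual obstacle, and the key mechanism is that the zeroth-order expansions are precisely what Euler's formula converts into the coefficient $d+k$.
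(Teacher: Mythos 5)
Your proof is correct and follows exactly the route the paper indicates (and omits): a direct computation on pure tensors, with the Leibniz rule producing the cross terms that regroup into $\xi D(\tau)$ and Euler's identity supplying the coefficient $(2)+(k-1)+(d-1)=d+k$, plus the evaluation of $D$ on $p_k(l^{d+k})=l^k\otimes l^d$. The only blemish is the informal term count ("eight terms", "three of them"), which does not match the actual expansion but has no bearing on the displayed identities, all of which check out.
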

 We omit the proof, that can be achieved by a direct computation reducing oneself to the case $\tau=x^{k-1}\otimes y^{d-1}$ by linearity and $SL(2)$ invariance.
 One consequence of the Lemma above is the following.
\begin{cor}\label{cor:kerD^2} For  any $d,k\geq 1$ (resp. $d,k\geq 2$), the next two sequences are exact 
\[ \begin{array}{l}0\to p_k(S^{d+k}U)\to S^kU\otimes S^dU \stackrel{D}\to S^{k-1}U\otimes S^{d-1}U\to0\\

 0\to p_k(S^{d+k}U)\oplus \xi p_{k-1}(S^{d+k-2}U)\to S^kU\otimes S^dU \stackrel{D^2}\to S^{k-2}U\otimes S^{d-2}U\to 0.\end{array}\]
\end{cor}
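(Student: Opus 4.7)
The plan is to exploit the Pieri decomposition (\ref{eq:pieri}) together with Lemma \ref{lm:Dprop}, reducing both exactness statements to a computation of $D$ on each summand $\xi^{j}p_{k-j}(S^{d+k-2j}U)$. The key claim I would prove, by induction on $j\geq 1$, is that
$$D\bigl(\xi^{j}p_{k-j}(f)\bigr)=c_{j}(k,d)\,\xi^{j-1}p_{k-j}(f),$$
where the scalar $c_{j}(k,d)$ satisfies the recursion $c_{j}(k,d)=(d+k)+c_{j-1}(k-1,d-1)$ with $c_{1}(k,d)=d+k$. The base case is immediate from Lemma \ref{lm:Dprop}: $D\bigl(\xi p_{k-1}(f)\bigr)=(d+k)p_{k-1}(f)+\xi D(p_{k-1}(f))=(d+k)p_{k-1}(f)$. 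For the inductive step I would set $\tau=\xi^{j-1}p_{k-j}(f)\in S^{k-1}U\otimes S^{d-1}U$ and apply Lemma \ref{lm:Dprop} to $\xi^{j}p_{k-j}(f)=\xi\tau$; the inductive hypothesis, read inside $S^{k-1}U\otimes S^{d-1}U$ with parameters $(k-1,d-1)$ and level $j-1$, evaluates $D(\tau)$ and produces the claimed recursion. A glance at the recursion shows $c_{j}(k,d)>0$ for every admissible $(j,k,d)$.

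From this computation both exact sequences fall out. For a general element $\alpha=\sum_{j=0}^{k}\xi^{j}p_{k-j}(f_{j})$ the formula above yields
$$D(\alpha)=\sum_{j=1}^{k}c_{j}(k,d)\,\xi^{j-1}p_{k-j}(f_{j}),\qquad D^{2}(\alpha)=\sum_{j=2}^{k}c_{j}(k,d)\,c_{j-1}(k-1,d-1)\,\xi^{j-2}p_{k-j}(f_{j}),$$
both already written in the Pieri decompositions of $S^{k-1}U\otimes S^{d-1}U$ and $S^{k-2}U\otimes S^{d-2}U$ respectively. Comparing summands, and using that each $p_{k-j}$ is injective and all the constants $c_{\bullet}$ are nonzero, $D(\alpha)=0$ forces $f_{j}=0$ for $j\geq 1$ while $D^{2}(\alpha)=0$ forces $f_{j}=0$ for $j\geq 2$. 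This identifies $\ker D=p_{k}(S^{d+k}U)$ and $\ker D^{2}=p_{k}(S^{d+k}U)\oplus\xi p_{k-1}(S^{d+k-2}U)$, which matches decomposition (\ref{eq:decompD^2}). The injectivity of the leftmost maps in both sequences is then built-in, as these images are literally direct sums of Pieri summands.

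Surjectivity of $D$ and $D^{2}$ follows at once by a dimension count against the kernels just identified: $(k+1)(d+1)-(d+k+1)=kd=\dim(S^{k-1}U\otimes S^{d-1}U)$ and $(k+1)(d+1)-(d+k+1)-(d+k-1)=(k-1)(d-1)=\dim(S^{k-2}U\otimes S^{d-2}U)$. The only real technical hurdle is the inductive identity for $D$ on the Pieri summands; the delicate point there is checking that no constant $c_{j}(k,d)$ vanishes so that no spurious cancellation occurs in the kernel analysis, but since the recursion increments by the positive quantity $d+k$ at each step, positivity, and hence the whole argument, is automatic.
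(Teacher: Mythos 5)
Your proof is correct and follows essentially the same route as the paper's: both rest on the Pieri decomposition (\ref{eq:pieri}) together with the commutation formula of Lemma \ref{lm:Dprop}. Your explicit evaluation of $D$ on each summand $\xi^{j}p_{k-j}(S^{d+k-2j}U)$, with the resulting nonzero scalars $c_j(k,d)=j(d+k-j+1)$, just makes fully explicit the surjectivity and kernel identification that the paper states more briefly.
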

\begin{proof} We start with the first sequence. The fact that the sequence is a complex is the second statement of Lemma \ref{lm:Dprop}. By the first statement of Lemma \ref{lm:Dprop} and by (\ref{eq:pieri}) and (\ref{eq:decompD}), the operator $D$ maps the subspace $\xi (S^{k-1}U\otimes S^{d-1}U)$ of the space $S^kU\otimes S^dU=p_{k}(S^{d+k}U)\oplus\xi (S^{k-1}U\otimes S^{d-1}U)$ onto $S^{k-1}U\otimes S^{d-1}U$. The exactness in the middle also follows from the decomposition (\ref{eq:decompD}). The proof of the exactness of the second sequence is very similar. One first shows that $$D^2(p_k(S^{d+k}U)\oplus \xi p_{k-1}(S^{d+k-2}U))=0$$ by applying Lemma \ref{lm:Dprop} twice. Then the exactness follows from (\ref{eq:pieri}) and (\ref{eq:decompD^2}) in a similar way as above.   \end{proof}

On a different vein, one can use the operator $D^2$ to produce the invariant map
\begin{equation}\label{eq:maxrkmap}S^kU\otimes S^bU^\ast\stackrel{D^2}\longrightarrow S^{k-2}U\otimes S^{b+2}U^\ast.\end{equation}
In this map, the tensor $D^2=\partial_x^2\otimes \partial_y^2-2\partial_x\partial_y\otimes \partial_x\partial_y+\partial_y^2\otimes\partial_x^2$ acts by contraction on the $S^kU$ components and by multiplication on the $S^bU^\ast$ component.
Later we will need the following result.
\begin{prop}\label{prop:maxrk} The map (\ref{eq:maxrkmap}) has maximal rank for any $b\geq 0$ and $k\geq 2$.\end{prop}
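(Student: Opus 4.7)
The plan is to recognize $D^2$ as the square of a single first-order differential operator and then invoke Schur's lemma. Viewing an element of $S^kU\otimes S^bU^\ast$ as a bi-homogeneous polynomial $\phi(x,y,u,v)$ of bi-degree $(k,b)$ (recall $u=\partial_x$, $v=\partial_y$), one checks on pure tensors that the expression for $D^2$ is nothing but the iteration of the single operator
\[
E \;:=\; v\,\partial_x - u\,\partial_y \;:\; S^kU\otimes S^bU^\ast \longrightarrow S^{k-1}U\otimes S^{b+1}U^\ast,
\]
namely $D^2=E^2$; indeed, since $u,v$ commute with $\partial_x,\partial_y$, the square $(v\partial_x - u\partial_y)^2$ expands to $v^2\partial_x^2-2uv\partial_x\partial_y+u^2\partial_y^2$. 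In particular $E$, and hence $E^2$, is $SL(U)$-equivariant because its defining tensor $D$ is $SL(U)$-invariant.

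Next I would apply the Clebsch--Gordan decomposition. Since $U^\ast\cong U$ as $SL(U)$-module, both source and target decompose with multiplicity one:
\[
S^kU\otimes S^bU^\ast \;\cong\; \bigoplus_{j=0}^{\min(k,b)} S^{k+b-2j}U,\qquad S^{k-2}U\otimes S^{b+2}U^\ast \;\cong\; \bigoplus_{j=0}^{\min(k-2,b+2)} S^{k+b-2j}U.
\]
Schur's lemma then forces $E^2$ to respect this isotypic decomposition: on each common summand $S^{k+b-2j}U$ (those with $j\leq\min(k-2,b)$) it is either zero or an isomorphism, while summands appearing on only one side are either killed or lie outside the image.

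To decide which alternative holds, I exhibit an explicit highest weight vector. For $0\leq j\leq\min(k,b)$ set
\[
\phi_j \;:=\; x^{k-j}\,v^{b-j}\,(xu+yv)^j \;\in\; S^kU\otimes S^bU^\ast.
\]
Because $x$ and $v$ are the $SL(U)$-highest weight vectors of $U$ and $U^\ast$ and $xu+yv$ is the canonical $SL(U)$-invariant in $U\otimes U^\ast$, the element $\phi_j$ is an $sl_2$-highest weight vector of weight $k+b-2j$, hence spans the highest weight line of the summand $S^{k+b-2j}U$. Observing that $E(xu+yv) = vu - uv = 0$, a direct Leibniz computation yields
\[
E\phi_j = (k-j)\,x^{k-j-1}\,v^{b-j+1}\,(xu+yv)^j, \quad E^2\phi_j = (k-j)(k-j-1)\,x^{k-j-2}\,v^{b-j+2}\,(xu+yv)^j,
\]
which is non-zero precisely when $j\leq k-2$, i.e.\ on the whole range $j\leq\min(k-2,b)$ of common summands. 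Hence $E^2$ is an isomorphism on each common summand.

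The proof is then finished by an elementary dimension count: the rank of $E^2$ equals the sum of dimensions of the common summands, and one computes
\[
\sum_{j=0}^{\min(k-2,b)}(k+b-2j+1) \;=\; \begin{cases}(k+1)(b+1)=\dim(S^kU\otimes S^bU^\ast) & \text{if } b\leq k-2,\\ (k-1)(b+3)=\dim(S^{k-2}U\otimes S^{b+2}U^\ast) & \text{if } b\geq k-2,\end{cases}
\]
so $E^2$ is injective in the first case and surjective in the second. The only real obstacle is the first step, namely spotting the collapse $D^2=(v\partial_x-u\partial_y)^2$; once this is noticed, the rest is standard $SL(2)$-representation theory combined with the short computation on the vectors $\phi_j$.
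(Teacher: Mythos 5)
Your proof is correct and follows essentially the same route as the paper: the identity $D^2=(v\partial_x-u\partial_y)^2$ is exactly the paper's collapse $\delta^2=(\partial_x\otimes x+\partial_y\otimes y)^2$ transported through the invariant isomorphism $U^\ast\cong U$ (under which $xu+yv$ becomes $\xi$ and your $\phi_j$ become the $\xi^j$-multiples of polarizations used in the paper's lemma), and both arguments then combine the multiplicity-free Clebsch--Gordan/Pieri decomposition with Schur's lemma and a nonvanishing check on explicit vectors. The only difference is that the paper proves a more general lemma for $\delta^a$ on $S^kV\otimes S^bV$ with $\dim V$ arbitrary, whereas you specialize to $SL_2$ and make the highest weight vectors explicit.
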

\begin{proof} We use the identification $\phi:U^\ast\to U$ that maps $\alpha \partial_x+\beta \partial_y\mapsto -\beta x+\alpha y$. Note  that $\phi$ is $SL(2)$-invariant, as $\phi\wedge \phi$ maps $\partial_x\wedge \partial_y\mapsto y\wedge(- x)=x\wedge y$. Then for any $i,j\geq 0$ the map $1\otimes S^j(\phi):S^iU\otimes S^jU^\ast\to S^iU\otimes S^jU$ is a isomorphism. We can rewrite the map (\ref{eq:maxrkmap}) in terms of these identifications as follows
$$ S^kU\otimes S^bU\stackrel{\delta^2}\longrightarrow S^{k-2}U\otimes S^{b+2}U,$$ with $\delta^2=\partial_x^2\otimes x^2+2\partial_x\partial_y\otimes xy+\partial_y^2\otimes y^2=(\partial_x\otimes x+\partial_y\otimes y)^2$, acting as before by contraction on $S^kU$ and by multiplication on $S^bU$. Now the fact that $\delta^2$ has maximal rank is consequence of the following  similar but more general result. \end{proof}
\begin{lm}For any $(n+1)$-dimensional $\C$-vector space $V=\langle x_0,\ldots,x_n\rangle$ and any $k\geq a$ and $b\geq 0$, setting $\delta=(\partial_{x_0}\otimes x_0+\cdots+\partial_{x_n}\otimes x_n)$, the map
\begin{equation}\label{eq:delta}S^kV\otimes S^bV\stackrel{\delta^a}\longrightarrow S^{k-a}V\otimes S^{b+a}V\end{equation} has maximal rank. 
\end{lm}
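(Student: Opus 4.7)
The plan is to realize $\delta$ as the raising operator of an $\mathfrak{sl}_2$-triple acting on $S^\bullet V\otimes S^\bullet V$ and then deduce the maximal rank statement from the representation theory of finite-dimensional $\mathfrak{sl}_2$-modules. Alongside $\delta$ I would introduce the companion lowering operator
$$\delta'=x_0\otimes \partial_{x_0}+\cdots+x_n\otimes\partial_{x_n}\colon S^kV\otimes S^bV\to S^{k+1}V\otimes S^{b-1}V,$$
together with the diagonal operator $H$ that acts on $S^kV\otimes S^bV$ as multiplication by the scalar $b-k$. A routine computation, using the Weyl-algebra relation $[\partial_{x_i},x_j]=\delta_{ij}$ (Kronecker) and the Euler identity $\sum_i x_i\partial_{x_i}=\deg$ on each tensor factor, should then yield the $\mathfrak{sl}_2$-commutation relations
$$[\delta,\delta']=H,\qquad [H,\delta]=2\delta,\qquad [H,\delta']=-2\delta'.$$

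Since $\delta$ and $\delta'$ preserve the total degree $n=k+b$, for each $n$ the finite-dimensional space $W_n=\bigoplus_{k+b=n}S^kV\otimes S^bV$ is an $\mathfrak{sl}_2$-module whose weight space of $H$-weight $\mu=b-k$ coincides with $S^kV\otimes S^bV$; the map \eqref{eq:delta} is then the restriction to this weight space of the $a$-th power of the raising operator. To finish, I would decompose $W_n$ as a direct sum of irreducibles $V(m_j)$ and observe that on each summand $\delta^a$ either carries the weight-$\mu$ line isomorphically onto the weight-$(\mu+2a)$ line, when $m_j\geq \max(|\mu|,|\mu+2a|)$, or else vanishes. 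Consequently the rank of $\delta^a$ on $W_{n,\mu}$ equals the number of summands with $m_j\geq \max(|\mu|,|\mu+2a|)$, and this is precisely $\min(\dim W_{n,\mu},\dim W_{n,\mu+2a})=\min(\dim(S^kV\otimes S^bV),\dim(S^{k-a}V\otimes S^{b+a}V))$, which is the maximal rank assertion.

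The only real computation is the verification of $[\delta,\delta']=H$, which I expect to be the main technical step; the two weight relations $[H,\delta]=2\delta$ and $[H,\delta']=-2\delta'$ are immediate from the bidegree shifts of $\delta$ and $\delta'$. The main conceptual observation is simply that the natural $SL(V)$-invariant pair $(\delta,\delta')$ already assembles into an $\mathfrak{sl}_2$-triple on the bigraded algebra $S^\bullet V\otimes S^\bullet V$; once this $\mathfrak{sl}_2$-structure is in hand, the conclusion reduces to a textbook fact about weight spaces in finite-dimensional $\mathfrak{sl}_2$-modules and requires no further work.
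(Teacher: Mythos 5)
Your proof is correct, and it takes a genuinely different route from the paper's. The paper decomposes both $S^kV\otimes S^bV$ and $S^{k-a}V\otimes S^{b+a}V$ into $GL(V)$-irreducibles $S_{(k+b-i,i)}V$ via the Pieri rule, notes that the set of summands of one space is contained in that of the other, and then verifies that $\delta^a$ is nonzero on each common summand; the computational inputs there are that $\delta$ annihilates the elements $x_h\otimes x_k-x_k\otimes x_h$ and, being a derivation, commutes with multiplication by them, together with the identity $\delta^a(p_{k-i}(g))=p_{k-i-a}(g)$ up to a nonzero scalar, after which Schur's lemma upgrades nonvanishing to an isomorphism on each common piece. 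You instead work with the operators on the \emph{other} side of this Howe-dual picture: the triple $(\delta,\delta',H)$ is indeed an $\mathfrak{sl}_2$-triple, since $[\delta,\delta']=\sum_i\bigl(1\otimes x_i\partial_{x_i}-x_i\partial_{x_i}\otimes 1\bigr)$ acts on $S^kV\otimes S^bV$ by the scalar $b-k$, exactly as you predict, and the two weight relations are immediate from the bidegree shifts. Granting this, your reduction is airtight: the weight-space dimension $\mu\mapsto\dim W_{n,\mu}$ counts the irreducible constituents $V(m_j)$ with $m_j\geq|\mu|$, so it is non-increasing in $|\mu|$, and the rank of $e^a=\delta^a$ between the weight-$\mu$ and weight-$(\mu+2a)$ spaces is the number of constituents containing both weights, which is $\min(\dim W_{n,\mu},\dim W_{n,\mu+2a})$. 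Your route trades the Pieri rule and the explicit evaluation on highest-weight vectors for a single commutator computation, and is arguably cleaner and more self-contained; the paper's route has the marginal advantage of exhibiting explicitly which $GL(V)$-isotypic pieces form the kernel and cokernel, though only the maximal-rank conclusion is used later. Either argument is complete once its respective key computation is written out.
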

\begin{proof}[Sketch of proof] This result should be rather well known, however we sketch a proof for convenience of the reader. We use the invariance of $\delta$ and the Pieri decompositions of $S^kV\otimes S^bV$ and $S^{k-a}V\otimes S^{b+a}V$ as $SL(V)$-modules. As it is well known 
\begin{equation}\label{eq:SkSb}S^kV\otimes S^bV=\bigoplus_{i=0}^{\min(k,b)}S_{(k+b-i,i)}V,\end{equation} where $S_{(k+b-i,i)}V$ is the $SL(V)$-irreducible tensor space resulting by applying to $V$ the Schur functor associated to the Young diagram with two rows of lengths $k+b-i$ and $i$, respectively.  One has the similar decomposition 
\begin{equation}\label{eq:SkSba}S^{k-a}V\otimes S^{b+a}V=\bigoplus_{i=0}^{\min(k-a,a+b)}S_{(k+b-i,i)}V.\end{equation}
Note that if $b\leq k-a$, then all the summands $S_{(k+b-i,i)}V$ appearing in (\ref{eq:SkSb}) appear also in (\ref{eq:SkSba}) and, on the other hand, if $b\geq k-a$, then all the summands in (\ref{eq:SkSba}) appear in (\ref{eq:SkSb}). Then the proof is complete if one shows that for any summand appering in both the formulas above, the composition 
$$S_{(k+b-i,i)}V\hookrightarrow S^kV\otimes S^bV\stackrel{\delta^a}\longrightarrow S^{k-a}V\otimes S^{b+a}V\twoheadrightarrow S_{(k+b-i,i)}V$$ is non-zero and hence an isomorphism. It is well known that the first invariant inclusion identifies $S_{(k+b-i,i)}V$ as the subspace of $S^kV\otimes S^bV$ generated by tensors of the form $\xi_1\ldots\xi_if$, where $\xi_j$ are tensors of the form $x_h\otimes x_k-x_k\otimes x_h$ and $f\in p_{k-i}(S^{k+b-2i}V)\subset S^{k-i}V\otimes S^{b-i}V$. Then one observes the fundamental fact that $\delta(x_h\otimes x_k-x_k\otimes x_h)=0$. Since $\delta$ is a derivation on the commutative ring $S^\bullet V\otimes S^\bullet V$, one deduces that $\delta$ commutes with $x_h\otimes x_k-x_k\otimes x_h$ and hence $\delta^a(\xi_1\ldots\xi_if)=\xi_1\ldots\xi_i\delta^a(f)$. Then one concludes by the observation that $f=p_{k-i}(g)$ and one can easily check that $\delta^a(f)=\delta^{a}(p_{k-i}(g))=p_{k-i-a}(g)$, up to some non-zero rational factor. Hence the map $\delta^a$ is non-zero when restricted to $S_{(k+b-i,i)}V$. \end{proof}

\subsection{The invariant embeddings $\psi_k:U\otimes S^{d+k-1}U\to S^kU\otimes S^dU$}
We define these maps as the compositions 
\[\begin{CD}U\otimes S^{d+k-1}U@>1\otimes p_k>> U\otimes S^kU\otimes S^{d-1}U@>\widetilde{m}>>S^kU\otimes S^dU,\end{CD}\] where $\widetilde{m}$ is the multiplication of the first and the third tensor components of $U\otimes S^kU\otimes S^{d-1}U$. The maps $\psi_k$ are obviously $SL(U)$-invariant. We will show that the maps $\psi_k$ are invariant embeddings for any $k\geq 1$. 
\begin{prop}\label{prop:psik} For any $d\geq 2$ and $k\geq 1$ the map $\psi_k$ is injective  and  $$\psi_k(U\otimes S^{d+k-1}U)=\ker (D^2: S^kU\otimes S^dU\to S^{k-2}U\otimes S^{d-2}U),$$ where the map above is set to be the zero map in the case $k=1$.\end{prop}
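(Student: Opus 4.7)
The plan is to show $\operatorname{Im}(\psi_k)\subseteq \ker D^2$ by a direct check on generators, then upgrade this to the equality $\operatorname{Im}(\psi_k) = \ker D^2$ (which forces injectivity by a dimension count) via $SL(U)$-equivariance and two explicit witnesses.

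First I would reduce to pure tensors: $U\otimes S^{d+k-1}U$ is spanned by elements $\ell\otimes m^{d+k-1}$ with $\ell, m\in U$, and since $p_k(m^{d+k-1})=m^k\otimes m^{d-1}$ one has $\psi_k(\ell\otimes m^{d+k-1})=m^k\otimes \ell m^{d-1}$. Writing $m=\alpha x+\beta y$ and $\ell=\gamma x+\delta y$ and applying the definition of $D$ directly, the ``$\ell m^{d-2}$-type'' cross terms cancel, yielding
$$D(m^k\otimes \ell m^{d-1})=k(\alpha\delta-\beta\gamma)\, m^{k-1}\otimes m^{d-1}.$$
But $m^{k-1}\otimes m^{d-1}=p_{k-1}(m^{d+k-2})\in \ker D$ by Lemma~\ref{lm:Dprop}, so applying $D$ once more gives $D^2\psi_k=0$ and hence $\operatorname{Im}(\psi_k)\subseteq \ker D^2$. (For $k=1$ this is vacuous since $D^2$ is the zero map by convention.)

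Next I would note the dimension match: by Corollary~\ref{cor:kerD^2} one has $\ker D^2=p_k(S^{d+k}U)\oplus \xi p_{k-1}(S^{d+k-2}U)$, of dimension $(d+k+1)+(d+k-1)=2(d+k)=\dim(U\otimes S^{d+k-1}U)$. So it suffices to prove the stronger equality $\operatorname{Im}(\psi_k)=\ker D^2$; injectivity then follows automatically. For this I would invoke the $SL(U)$-equivariance of $\psi_k$ (immediate from the construction): $\operatorname{Im}(\psi_k)$ is an $SL(U)$-submodule of $\ker D^2$, and the two summands above are non-isomorphic irreducible $SL(U)$-modules (of highest weights $d+k$ and $d+k-2$), so by Schur's lemma $\operatorname{Im}(\psi_k)$ must be one of $\{0,\, p_k(S^{d+k}U),\, \xi p_{k-1}(S^{d+k-2}U),\, \ker D^2\}$. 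Two explicit witnesses now suffice to force the last possibility: the element $\psi_k(l\otimes l^{d+k-1})=l^k\otimes l^d=p_k(l^{d+k})$ is nonzero and lies in $p_k(S^{d+k}U)$ (ruling out $0$ and $\xi p_{k-1}(S^{d+k-2}U)$), while $\psi_k(y\otimes x^{d+k-1})=x^k\otimes yx^{d-1}$ satisfies $D(x^k\otimes yx^{d-1})=kx^{k-1}\otimes x^{d-1}\neq 0$ by the formula of the first step, so it lies outside $\ker D\supseteq p_k(S^{d+k}U)$ (ruling out $p_k(S^{d+k}U)$). Hence $\operatorname{Im}(\psi_k)=\ker D^2$.

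The only real technical point is the cancellation in the computation of $D(m^k\otimes \ell m^{d-1})$ in the first step; once this explicit formula is in hand the rest of the argument is purely formal, since the two-dimensional $SL(U)$-representation theory reduces checking injectivity to selecting one test vector in each of the two irreducible summands of the source and observing that their images populate both irreducible summands of $\ker D^2$.
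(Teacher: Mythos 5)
Your proof is correct, and it reaches the conclusion by a route that is organized differently from the paper's, even though both arguments ultimately rest on $SL(U)$-equivariance, Schur's lemma applied to a two-irreducible decomposition, and the same two test vectors $l\otimes l^{d+k-1}$ and $y\otimes x^{d+k-1}$. The paper decomposes the \emph{domain} as $U\otimes S^{d+k-1}U=p_1(S^{d+k}U)\oplus\xi S^{d+k-2}U$, shows $\psi_k$ is nonzero on each irreducible summand (hence injective there), and identifies the two image summands explicitly as $p_k(S^{d+k}U)$ and $\xi p_{k-1}(S^{d+k-2}U)$ --- the latter via a somewhat laborious evaluation of $\psi_k(x\otimes x^{d+k-2}y)$ through the polarization formula (\ref{eq:pkformula}); the equality with $\ker D^2$ then drops out of Corollary \ref{cor:kerD^2} without ever checking $D^2\circ\psi_k=0$ directly. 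You instead prove the inclusion $\operatorname{Im}\psi_k\subseteq\ker D^2$ first, via the clean cancellation $D(m^k\otimes\ell m^{d-1})=k(\alpha\delta-\beta\gamma)\,p_{k-1}(m^{d+k-2})$, and then close the argument with a dimension count and Schur's lemma applied to the \emph{codomain} decomposition of $\ker D^2$; your second witness only needs $D(x^k\otimes x^{d-1}y)\neq 0$, which is much lighter than the paper's computation. What your version buys is economy (no polarization-formula bookkeeping) and an explicit verification of the complex property $D^2\psi_k=0$; what the paper's version buys is the finer statement $\psi_k(p_1(S^{d+k}U))=p_k(S^{d+k}U)$ and $\psi_k(\xi S^{d+k-2}U)=\xi p_{k-1}(S^{d+k-2}U)$ with explicit constants. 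One small bookkeeping point: for $k=1$ the identification $\ker D^2=p_1(S^{d+1}U)\oplus\xi p_0(S^{d-1}U)$ should be quoted from the Pieri decomposition (\ref{eq:pieri}) rather than from Corollary \ref{cor:kerD^2}, whose second sequence is only stated for $k\geq 2$; since $D^2$ is the zero map by convention there, the dimension count still closes, and the paper elides the same point.
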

\begin{proof}  We use the decomposition $U\otimes S^{d+k-1}U=p_1(S^{d+k}U)\oplus \xi S^{d+k-2}U$, which is a particular case of (\ref{eq:decompD}). Since the two summands are  irreducible representations of $SL(U)$ and the map $\psi_k$ is $SL(U)$ invariant, to show the injectivity of $\psi_k$ it will be sufficient to show that $\psi_k$ is non-zero on the summands $p_1(S^{d+k}U)$ and $\xi S^{d+k-2}U$. We will achieve that by computing $\psi_k$ on some special elements of these summands. 

For $l\otimes l^{d+k-1}\in p_1(S^{d+k}U)$ we see that 
\begin{eqnarray*}\psi_k(l\otimes l^{d+k-1})&=&\widetilde{m}((1\otimes p_k)(l\otimes l^{d+k-1}))\\
&=&\widetilde{m}(l\otimes l^k\otimes l^{d-1})\\
&=&l^k\otimes l^d\in p_k(S^{d+k}U)\subset S^kU\otimes S^dU.\end{eqnarray*}
Now let us consider the element $\xi x^{d+k-2}=x\otimes x^{d+k-2}y-y\otimes x^{d+k-1}\in \xi S^{d+k-2}U$. We compute separately $\psi_k(x\otimes x^{d+k-2}y)$ and $\psi_k(y\otimes x^{d+k-1})$. One finds easily $$\psi_k(y\otimes x^{d+k-1})=x^k\otimes x^{d-1}y.$$ 

From formula (\ref{eq:pkformula}) one has 
\begin{eqnarray*}p_k(x^{d+k-2}y)&=&\frac{(d-1)!}{(d+k-1)!}(x^k\otimes\partial_x^k(x^{d+k-2}y)+kx^{k-1}y\otimes\partial_x^{k-1}\partial_y(x^{d+k-2}y))\\
&=&\Small{\frac{(d-1)!}{(d+k-1)!}}\left(\frac{(d+k-2)!}{(d-2)!}x^k\otimes x^{d-2}y+kx^{k-1}y\otimes \frac{(d+k-2)!}{(d-1)!}x^{d-1}\right)\\
&=&\frac{1}{d+k-1}((d-1)x^k\otimes x^{d-2}y+kx^{k-1}y\otimes x^{d-1}).
\end{eqnarray*}
Hence one obtains
$$\psi_k(x\otimes x^{d+k-2}y)=\frac{1}{d+k-1}((d-1)x^k\otimes x^{d-1}y+kx^{k-1}y\otimes x^{d}).$$ Then we find
\begin{eqnarray*}\psi_k(\xi x^{d+k-1})&=&\psi_k(x\otimes x^{d+k-2}y)-\psi_k(y\otimes x^{d+k-1})\\
&=&\frac{1}{d+k-1}((d-1)x^k\otimes x^{d-1}y+kx^{k-1}y\otimes x^{d})\\&-&\frac{1}{d+k-1}((d+k-1)x^k\otimes x^{d-1}y)\\
&=&\frac{k}{d+k-1}(x^{k-1}y\otimes x^{d}-x^k\otimes x^{d-1}y)\\
&=&-\frac{k}{d+k-1}\xi (x^{k-1}\otimes x^{d-1})\in \xi p_{k-1}(S^{d+k-2}U).
\end{eqnarray*}
The calculations made above show that $\psi_k$ restricts to a non zero $SL(U)$-invariant map on $p_1(S^{d+k}U)$ and $\xi S^{d+k-2}U$, in particular by the $SL(U)$ irreducibility of these spaces, one gets \begin{eqnarray*}\psi_k(p_1(S^{d+k}U))&=& p_{k}(S^{d+k}U)\\ \psi_k(\xi S^{d+k-2}U)&=& \xi p_{k-1}(S^{d+k-2}U),\end{eqnarray*} proving the global injectivity of $\psi_k$.  Moreover, applying Corollary \ref{cor:kerD^2}, one has 
$$\psi_k(U\otimes S^{d+k-1}U)=p_{k}(S^{d+k}U)\oplus \xi p_{k-1}(S^{d+k-2}U)=\ker D^2.$$
\end{proof}
\section{A new setup for computing the cohomology of $\N_f$}\label{sec:mainthm}
From now on we will assume $f:\PP^1\to \PP^s$ parametrizes a rational curve with ordinary singularities and that $f=\pi_T\circ \nu_d$, so the parametrized curve arises as projection of the rational normal curve $C_d$ from a vertex $\PP(T)$.
Let us recall the operator $$D^2:S^kU\otimes S^dU\to S^{k-2}U\otimes S^{d-2}U$$ discussed in Section \ref{sec:tensors}.
 We state the main theorem of this article, whose proof will be given at the end of this section. 
  \begin{thm}\label{thm:mainnow} For any $k\geq 1$ one has \begin{eqnarray*}h^0\T_f(-d-2-k)&=&\dim (\ker D\cap(S^kU\otimes T))\\
  h^0\N_f(-d-2-k)&=&\dim (\ker D^2\cap(S^kU\otimes T)).\end{eqnarray*} \end{thm}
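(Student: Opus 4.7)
The plan is to reduce both identities to intersections of the subspace $S^kU\otimes T\subset S^kU\otimes S^dU$ with the images of the $SL(U)$-invariant maps $p_k$ and $\psi_k$ introduced in Section~\ref{sec:tensors}, by combining the factorisation $f=\pi_T\circ \nu_d$ with Serre duality on $\PP^1$.

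For $\T_f$ the natural device is the restricted Euler sequence $0\to \OO\to V\otimes \OO(d)\to \T_f\to 0$, where $V=S^dU/T$. Twisting by $\OO(-d-2-k)$, the $H^0$-terms outside of $\T_f(-d-2-k)$ vanish for $k\geq 1$, giving
\[ 0\to H^0\T_f(-d-2-k)\to S^{d+k}U\xrightarrow{\beta}V\otimes S^kU\]
after Serre duality $H^1\OO(-m)\cong S^{m-2}U$. The map $\beta$ is Serre-dual to the multiplication $V^\ast\otimes S^kU^\ast\to S^{d+k}U^\ast$; since $V^\ast=T^\perp\subset S^dU^\ast$, this multiplication is the restriction of $S^dU^\ast\otimes S^kU^\ast\to S^{d+k}U^\ast$, whose dual is by definition of $p_k$ the polarization $p_k\colon S^{d+k}U\to S^kU\otimes S^dU$. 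Thus $\beta$ factors as $S^{d+k}U\xrightarrow{p_k}S^kU\otimes S^dU\twoheadrightarrow S^kU\otimes V$, so $\ker\beta=p_k^{-1}(S^kU\otimes T)$. By the injectivity of $p_k$ and the first half of Corollary~\ref{cor:kerD^2}, this identifies with $p_k(S^{d+k}U)\cap (S^kU\otimes T)=\ker D\cap (S^kU\otimes T)$, as claimed.

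For $\N_f$ I would introduce the rank-$2$ subsheaf $F\subset S^dU\otimes \OO(d)$ whose fibre at $[p]$ is $p^{d-1}U$. Equivalently, $F$ is the image of the canonical map $U\otimes \OO(1)\to S^dU\otimes \OO(d)$ obtained by twisting by $\OO(d)$ the composition $U\otimes \OO(-(d-1))\hookrightarrow U\otimes S^{d-1}U\otimes \OO\xrightarrow{\mu\otimes 1}S^dU\otimes \OO$, with $\mu$ the multiplication; and $F$ is also the preimage of $\T_{\PP^1}\subset \nu_d^\ast \T_{\PP^d}$ under the Euler surjection $S^dU\otimes \OO(d)\twoheadrightarrow \nu_d^\ast\T_{\PP^d}$. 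The hypothesis $\PP(T)\cap C_d=\emptyset$ together with the injectivity of $df$ forces $F\cap (T\otimes\OO(d))=0$, and one obtains the short exact sequence
\[ 0\to F\oplus T\otimes \OO(d)\to S^dU\otimes \OO(d)\to \N_f\to 0.\]
Twisting by $\OO(-d-2-k)$ for $k\geq 1$ and arguing as before, this yields
\[ 0\to H^0\N_f(-d-2-k)\to U\otimes S^{d+k-1}U\oplus T\otimes S^kU\xrightarrow{\gamma}S^dU\otimes S^kU,\]
where the second component of $\gamma$ is the natural inclusion and the first, by the same Serre-duality recipe applied to $\mu$, is the map $\psi_k$ (modulo the swap $S^kU\otimes S^dU\cong S^dU\otimes S^kU$). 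An element $(\alpha,\tau)\in \ker\gamma$ satisfies $\psi_k(\alpha)=-\tau\in S^kU\otimes T$; projecting onto $\tau$ and applying Proposition~\ref{prop:psik} yields a bijection $\ker\gamma\cong \psi_k(U\otimes S^{d+k-1}U)\cap (S^kU\otimes T)=\ker D^2\cap (S^kU\otimes T)$.

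The main technical difficulty is the Serre-duality identification of $\beta$ with $p_k$ and of the first component of $\gamma$ with $\psi_k$. Both identifications rest on the same observation: these boundary maps are transposes of the multiplication maps from which $p_k$ and $\psi_k$ are, by definition, constructed.
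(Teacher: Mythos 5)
Your argument is correct, and while it runs on the same fuel as the paper's proof (Euler-type sequences, Serre duality on $\PP^1$, and the invariant maps $p_k$ and $\psi_k$), it is packaged differently at both steps. For $\T_f$ the paper simply quotes $h^0\T_f(-d-2-k)=\dim\partial^{-k}T$ from \cite{alzati-re} and then matches $\partial^{-k}T$ with $\ker D\cap(S^kU\otimes T)$ via (\ref{eq:pkformula}); your derivation from the restricted Euler sequence, identifying the connecting map as $p_k$ followed by the projection $S^kU\otimes S^dU\to S^kU\otimes(S^dU/T)$, is self-contained and gives the same answer since $p_k^{-1}(S^kU\otimes T)=\partial^{-k}T$. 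For $\N_f$ the paper works with the quotient sequence (\ref{eq:normal}), $0\to U\otimes\OO_{\PP^1}(1)\to (T^\perp)^\ast\otimes\OO_{\PP^1}(d)\to\N_f\to 0$, computes the induced $H^1$-map explicitly in the coordinates $g_0,\ldots,g_s$ to arrive at the space $T_k$ (Proposition~\ref{thm:relTpTp_ast}), and only afterwards identifies $T_k$ with $\ker D^2\cap(S^kU\otimes T)$ via $\psi_k$; your sequence $0\to F\oplus (T\otimes\OO(d))\to S^dU\otimes\OO(d)\to\N_f\to 0$ is precisely the pullback of (\ref{eq:normal}) along $S^dU\to S^dU/T$, and it lets you read off $\ker\gamma\cong\psi_k^{-1}(S^kU\otimes T)$ in one stroke, which is arguably cleaner. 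Two points deserve care in a full write-up. First, the exactness of your sequence (equivalently, that $F+T\otimes\OO(d)$ is a subbundle of rank $e+3$) needs $T\cap p^{d-1}U=0$ for every $[p]$, i.e.\ that $\PP(T)$ misses every tangent line of $C_d$; this is exactly the standing hypothesis that $df$ is a vector bundle embedding (ordinary singularities), which you do invoke, but it is stronger than $\PP(T)\cap C_d=\emptyset$ alone. Second, the identification of the first component of $\gamma$ with $\psi_k$ (up to the factor swap and a nonzero constant) is asserted rather than proved; it is true — unwinding the Serre-dual of multiplication by the tautological section of $S^{d-1}U\otimes\OO(d-1)$ reproduces $\widetilde m\circ(1\otimes p_k)$ — but this unwinding is exactly the coordinate computation the paper performs in Section~\ref{sec:mainthm} together with formula (\ref{eq:pkformula}), so it cannot be skipped without circularity.
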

\subsection{Euler sequence and  its consequences}  
Let $C\subset\PP^s$ be a degree $d$ rational curve with ordinary singularities. As in the notation above we assume there is a parametrization map $f:\PP^1\to \PP^s$ obtained projecting the rational normal curve $C_d$ from a vertex $\PP(T)\subset \PP(S^dU)$. Since $f=\pi_T\circ \nu_d$, we have $\PP^s=\PP(S^dU/T)$. Note also that the natural inclusion $(S^dU/T)^\ast\subset S^dU^\ast$  identifies $(S^dU/T)^\ast=T^\perp$. Hence we can set
$$\dim T=e+1,\ \dim T^\perp=s+1=d-e.$$
We have a commutative diagram
\[\begin{CD} &&\OO_{\PP^1}@>=>>\OO_{\PP^1}&\\
&&@VVV @VVV\\
0@>>> U\otimes\OO_{\PP^1}(1)@>J(f)>> (T^\perp)^\ast\otimes\OO_{\PP^1}(d)@>>> \N_f@>>> 0\\
&&@VVV @VVV @VV\rm{id}V\\
0@>>>\T_{\PP^1}@>df>> \T_f@>>> \N_f@>>>0.\end{CD}\]
Indeed, if the map $f:\PP^1\to \PP((T^\perp)^\ast)=\PP^s$ is given in coordinates by $$f(u:v)=(g_0(u,v):\cdots:g_s(u,u)),$$ with $g_i(u,v)\in S^dU^\ast$, then the map $J(f):U\otimes \OO_{\PP^1}(1)\to (T^\perp)^\ast\otimes\OO_{\PP^1}(d)$ in the diagram above is given fiberwise by the differentials $df|_{(u,v)}:T_{(u,v)}(C\PP^1)\to T_{f(u,v)}(C\PP^s)$ of the map among the associated affine cones $f:C\PP^1\to C\PP^s$. Hence it has associated matrix
\[J(f)=\left(\begin{array}{ll}\partial_ug_0(u,v)&\partial_vg_0(u,v)\\
\vdots&\vdots\\ \partial_ug_s(u,v)&\partial_vg_s(u,v)\end{array}\right).\]
Let us consider the exact sequence 
\begin{equation}\label{eq:normal}0\to U\otimes\OO_{\PP^1}(1)\to (T^\perp)^\ast\otimes\OO_{\PP^1}(d)\to \N_f\to 0.\end{equation} From this sequence we get $$\deg \N_f(-d-1)=-(d-e)+2d=d+e.$$
Writing, as in the introduction, 
\begin{equation}\label{eq:normalsplitting}\N_f=\OO_{\PP^1}(c_1+d+2)\oplus\cdots\oplus \OO_{\PP^1}(c_{s-1}+d+2)\end{equation} with $c_1\geq\cdots\geq c_{s-1}$, we see that \begin{equation}\label{eq:c_i}
\sum_{i=1}^{s-1}(c_i+1)=d+e,\quad 
\sum_{i=1}^{s-1}c_i=2(e+1).\end{equation}

Taking the cohomology exact sequence from (\ref{eq:normal}) we obtain, for any $k\geq d+1$,
\begin{equation}\label{eq:normalcohom}H^0\N_f(-k)\hookrightarrow U\otimes H^1\OO_{\PP^1}(1-k)\to (T^\perp)^\ast\otimes H^1\OO_{\PP^1}(d-k)\twoheadrightarrow H^1\N_f(-k).\end{equation}
If $k=d+1$ one obtains $H^0\N_f(-d-1)\cong U\otimes H^1\OO_{\PP^1}(-d)$. Let us now consider the cases $k\geq d+2$. We have $T^\perp=\langle g_0,\ldots,g_s\rangle$ and let us denote $g_0^\ast,\ldots,g_s^\ast$ the dual basis to the $g_i$'s in $(T^\perp)^\ast=S^dU/T$. Recall that if we write $U^\ast=\langle u,v\rangle$, with $u,v$ the dual basis of $x,y\in U$, then   the first nonzero map in (\ref{eq:normal}) is defined by $x\otimes l\mapsto \sum_i g_i^\ast\otimes l \partial_ug_i$ and $y\otimes l'\mapsto \sum_i g_i^\ast\otimes l'\partial_vg_i$, for any local sections $l,\ l'$ of $\OO_{\PP^1}(1)$.

As it is well known, by Serre duality one can identify the spaces $H^1\OO_{\PP^1}(1-k)$ and $H^1\OO_{\PP^1}(d-k)$ appearing in the exact sequence (\ref{eq:normalcohom}) with  $(H^0\OO_{\PP^1}(k-3))^\ast=S^{k-3}U$ and $(H^0\OO_{\PP^1}(k-d-2))^\ast=S^{k-d-2}U$, respectively. Moreover it is well known that any sheaf map  $\OO_{\PP^1}(1-k)\stackrel{\sigma}\to\OO_{\PP^1}(d-k)$, associated to a global section $\sigma\in H^0\OO_{\PP^1}(d-1)=S^{d-1}U^\ast$, induces a map between the cohomology spaces $H^1\OO_{\PP^1}(1-k)\stackrel{\sigma}\to H^1\OO_{\PP^1}(d-k)$ that, under the identifications above, can be written as the linear map $S^{k-3}U\stackrel{\sigma}\to S^{k-d-2}U$ defined by letting $\sigma$ act as a differential operator on $S^{k-3}U$. In our case the sheaf map $U\otimes\OO_{\PP^1}(1-k)\to (T^\perp)^\ast\otimes\OO_{\PP^1}(d-k)$ arising from (\ref{eq:normal}), after the identifications $U\cong\C^2$ and $T^\perp\cong\C^{s+1}$ by means of the mentioned bases $x,y$ and $g_0,\ldots,g_{s}$, can be seen as a sheaf map $\OO_{\PP^1}^2(1-k)\to \OO_{\PP^1}^{s+1}(d-k)$, whose components have the form $\OO_{\PP^1}(1-k)\stackrel{\partial_ug_i}\to\OO_{\PP^1}(d-k)$ and $\OO_{\PP^1}(1-k)\stackrel{\partial_vg_i}\to\OO_{\PP^1}(d-k)$. The induced maps on the $H^1$ cohomology spaces are therefore $\partial_ug_i: S^{k-3}U\to S^{k-2-d}U$ and $\partial_vg_i: S^{k-3}U\to S^{k-2-d}U$,  acting as  differential operators of order $d-1$. 

From the discussion above it follows that 
one can compute $H^0\N_f(-k)$ as the kernel of the linear map 
\begin{equation}\label{eq:H0Naskernel} U\otimes S^{k-3}U\to (T^\perp)^\ast\otimes S^{k-d-2}U \end{equation} defined by $x\otimes f\mapsto\sum_i g_i^\ast\otimes (\partial_u g_i)(f)$ and  $y\otimes f'\mapsto\sum_i g_i^\ast\otimes (\partial_v g_i)(f')$, where $\partial_ug_i,\partial_vg_i: S^{k-3}U\to S^{k-2-d}U$  act as  differential operators of order $d-1$. Let us compute the kernel  $H^0\N_f(-k)$ of the linear map (\ref{eq:H0Naskernel}). 

The space  $H^0\N_f(-k)$, seen as a subspace of $U\otimes S^{k-3}U$,  is the space of tensors $x\otimes f_0+y\otimes f_1\in U\otimes S^{k-3}U$ such that $(\partial_u g_i)(f_0)+(\partial_v g_i)(f_1)=0\in S^{k-d-2}U$ for all $i=0,\ldots,s$. This is equivalent to impose that
 $f_0(P\partial_ug)+f_1(P\partial_vg)=0$ for any $g\in T^\perp$ and any $P\in S^{k-d-2}U^\ast$. This is equivalent to say that $$P(f_0)(\partial_ug)+P(f_1)(\partial_vg)=0$$ for any $P\in S^{k-d-2}U^\ast$ and any $g\in T^\perp$. 
By applying the version of formula (\ref{eq:contrderiv}) with the roles of $U$ and $U^\ast$ interchanged and recalling that the elements $x,y\in U$ act as  $\partial_u,\partial_v$ on $\C[u,v]$, respectively, one sees that for any $\phi\in S^{d-1}U$ and any $g\in S^dU^\ast$ one has  $\phi(\partial_ug)=(x\phi)(g)$ and similarly $\phi(\partial_vg)=(y\phi)(g)$. Hence we can rewrite the last displayed equation in the following form
$$(xP(f_1)+yP(f_2))(g)=0\quad \forall\ g\in T^\perp,\ \forall P\in S^{k-d-2}U^\ast, $$ which means
\begin{equation}\label{eq:T'_kcond} xP(f_1)+yP(f_2)\in T,\ \forall P\in S^{k-d-2}U^\ast.  \end{equation}
 
 \begin{notation} The calculations made above hold for any $k\geq d+2$. We find it convenient, from now on, to reset $k\leftarrow k-d-2$. 
 Accordingly, we denote, for any $k\geq 0$,
   $$T_k=\{x\otimes f_0+y\otimes f_1\in U\otimes S^{d+k-1}U\ |\ x P(f_0)+yP(f_1)\in T,\ \forall\ P\in S^kU^\ast\}.$$ \end{notation}
   Hence we can resume the discussion above in the following result.
  \begin{prop}\label{thm:relTpTp_ast} Under the notation above, we have the following relation for any $k\geq d+2$.
 \begin{equation}\label{eq:h0NT}H^0\N_f(-d-2-k)=T_k.\end{equation}\end{prop}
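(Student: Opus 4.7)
The plan is to combine the Euler-style exact sequence \eqref{eq:normal} with Serre duality on $\PP^1$, and then translate the resulting kernel condition on $H^1$'s into a condition on tensors via the duality pairing between $S^\bullet U$ and $S^\bullet U^\ast$. Since the proposition is essentially a bookkeeping statement packaging the calculations already done in the section, the proof is largely a matter of making those identifications precise and checking that everything lines up.

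First, I would twist \eqref{eq:normal} by $\OO_{\PP^1}(-d-2-k)$ and take the associated long exact sequence in cohomology. The $H^0$ terms of $U\otimes\OO_{\PP^1}(-d-1-k)$ and $(T^\perp)^\ast\otimes\OO_{\PP^1}(-k-2)$ vanish for $k\geq 0$, so $H^0\N_f(-d-2-k)$ sits as the kernel of the connecting map
\[
U\otimes H^1\OO_{\PP^1}(-d-1-k)\longrightarrow (T^\perp)^\ast\otimes H^1\OO_{\PP^1}(-k-2).
\]
Serre duality identifies these $H^1$'s with $S^{d+k-1}U$ and $S^{k}U$, respectively, and the connecting map is induced by the components $\partial_ug_i,\partial_vg_i\in S^{d-1}U^\ast$ of $J(f)$, each acting as a differential operator of order $d-1$. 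Thus a tensor $x\otimes f_0+y\otimes f_1\in U\otimes S^{d+k-1}U$ lies in $H^0\N_f(-d-2-k)$ if and only if $(\partial_ug_i)(f_0)+(\partial_vg_i)(f_1)=0$ in $S^kU$ for every $i=0,\ldots,s$, equivalently for every $g\in T^\perp$.

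Next I would dualize this condition: it is equivalent to requiring $P\bigl((\partial_ug)(f_0)\bigr)+P\bigl((\partial_vg)(f_1)\bigr)=0$ for every $P\in S^kU^\ast$ and every $g\in T^\perp$. Using \eqref{eq:contrderiv} I can rewrite this as $P(f_0)(\partial_ug)+P(f_1)(\partial_vg)=0$, and then applying the key identity $\phi(\partial_ug)=(x\phi)(g)$, $\phi(\partial_vg)=(y\phi)(g)$ for $\phi\in S^{d-1}U$, $g\in S^dU^\ast$ (which records the fact that $x,y\in U$ act on $U^\ast$ as $\partial_u,\partial_v$), this collapses to
\[
\bigl(xP(f_0)+yP(f_1)\bigr)(g)=0\qquad\forall\,g\in T^\perp,\ \forall\,P\in S^kU^\ast.
\]
Since $T=(T^\perp)^\perp$, this last condition is exactly $xP(f_0)+yP(f_1)\in T$ for every $P\in S^kU^\ast$, which is the defining condition of $T_k$.

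The only delicate point is the translation of the $H^1$ connecting map into the language of differential operators and then the bookkeeping between $S^nU$ and $S^nU^\ast$ via the duality pairing. Once the action $\phi(\partial_u g)=(x\phi)(g)$ is established (it suffices to verify it on monomials in $x,y$ and $u,v$), the chain of equivalences above is automatic and delivers $H^0\N_f(-d-2-k)=T_k$, which is what we wanted.
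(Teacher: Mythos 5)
Your proposal is correct and follows essentially the same route as the paper: the cohomology sequence of \eqref{eq:normal}, Serre duality identifying the $H^1$'s with $S^{d+k-1}U$ and $S^kU$, the description of the connecting map via $\partial_ug_i,\partial_vg_i$ acting as differential operators, and the dualization step using \eqref{eq:contrderiv} together with the identity $\phi(\partial_ug)=(x\phi)(g)$ to arrive at the condition $xP(f_0)+yP(f_1)\in T$. The only cosmetic difference is that you work directly with the shifted twist $-d-2-k$ throughout, whereas the paper computes with $-k$ and re-indexes at the end.
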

 The following proposition collects some facts that will be needed later, as well as some first applications of the result above.
 \begin{prop}\label{prop:seconddiff} Assume that $\N_f$ has a splitting of the form (\ref{eq:normalsplitting}). Then the following facts hold. 
 \begin{enumerate} 
 \item $h^0\N_f(-d-k-2)=\sum_{i:c_i\geq k}(c_i-k+1)$ for any $k\in \mathbb{Z}$. 
 \item Setting $f(-k)=h^0\N_f(-d-k-2)$ for any $k\in \mathbb{Z}$, one has $$\#\{i\ |\ c_i=k\}=\Delta^2f(-k)=f(-k)-2f(-k-1)+f(-k-2).$$
 \item $\sum_{i=1}^{s-1}(c_i+1)=d+e=d+\dim\PP(T)$. 
 \item $\sum_{i=1}^{s-1}c_i=2(e+1)=2\dim T$.  
 \item $c_{s-1}\geq 0$. 
 \end{enumerate}\end{prop}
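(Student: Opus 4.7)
My plan is to prove the five items in order, using material developed earlier in the section.

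Part (1) follows immediately by twisting the splitting $\N_f = \bigoplus_{i=1}^{s-1}\OO_{\PP^1}(c_i+d+2)$: one has $\N_f(-d-k-2) = \bigoplus \OO_{\PP^1}(c_i-k)$, and $h^0\OO_{\PP^1}(c_i-k)=\max(c_i-k+1,0)$, so only summands with $c_i\geq k$ contribute. Part (2) is then a routine finite-difference calculation from (1). One first observes that $\Delta f(-k) := f(-k)-f(-k-1) = \#\{i : c_i\geq k\}$: each $c_i\geq k+1$ contributes $1$ to the difference, each $c_i=k$ contributes $1$ from $f(-k)$ and $0$ from $f(-k-1)$, and smaller $c_i$ contribute nothing. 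Applying $\Delta$ once more gives $\Delta^2 f(-k) = \#\{c_i\geq k\}-\#\{c_i\geq k+1\}=\#\{c_i=k\}$.

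Parts (3) and (4) are exactly the two identities already displayed in equation (\ref{eq:c_i}); they are derived from the degree $\deg \N_f = (s+1)d-2$ computed via the normal bundle sequence (\ref{eq:normal}), combined with the relation $e=d-s-1$ and the equality $\sum(c_i+d+2)=\deg \N_f$. No further argument is needed beyond pointing at that display.

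The substantive item is (5). Here I would invoke Theorem \ref{thm:mainnow} at the special value $k=1$. Since $D^2$ maps $S^1U\otimes S^dU$ into $S^{-1}U\otimes S^{d-2}U=0$, it is identically zero on $S^1U\otimes S^dU$, so $\ker D^2\cap (U\otimes T)=U\otimes T$, which has dimension $2(e+1)$. Thus $h^0\N_f(-d-3)=2(e+1)$. On the other hand, Riemann--Roch on $\PP^1$ together with $\deg\N_f=(s+1)d-2$ and $e=d-s-1$ gives $\chi(\N_f(-d-3))=2d-2s=2(e+1)$, so $h^1\N_f(-d-3)=0$. Since $\N_f(-d-3)=\bigoplus\OO_{\PP^1}(c_i-1)$, the vanishing $h^1=0$ is equivalent to $c_i-1\geq -1$ for every $i$, i.e.\ $c_{s-1}\geq 0$. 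The key step to spot is the triviality of $D^2$ at $k=1$; once recognized, matching $h^0$ with $\chi$ is routine bookkeeping, and this is the only place where the main theorem of the section is really used.
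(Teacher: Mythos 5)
Your proposal is correct, and parts (1)--(4) follow the paper verbatim (the paper simply declares (1) and (2) ``easy and well known'' and points at (\ref{eq:c_i}) for (3) and (4)). Where you genuinely diverge is in (5). The paper works at the two twists $k=0$ and $k=-1$: it identifies $H^0\N_f(-d-2)$ with $m^{-1}(T)\subset U\otimes S^{d-1}U$ via Proposition \ref{thm:relTpTp_ast}, gets $h^0\N_f(-d-2)=d-1+\dim T=d+e$, compares this with $\sum_{i:c_i\geq 0}(c_i+1)\geq\sum(c_i+1)=d+e$ to conclude $c_i\geq -1$ for all $i$, and then rules out $c_i=-1$ by computing $h^0\N_f(-d-1)=2(d-1)$ from (\ref{eq:normal}) and applying the second-difference formula of part (2) at $k=-1$. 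You instead work at $k=1$: the observation that $D^2$ is identically zero on $U\otimes S^dU$ gives $h^0\N_f(-d-3)=\dim(U\otimes T)=2(e+1)$ via Theorem \ref{thm:mainnow}, and since this equals $\chi(\N_f(-d-3))=2d-2s$, you get $h^1\N_f(-d-3)=0$, i.e.\ $c_i-1\geq-1$ for all $i$. Both arguments are sound and non-circular (Theorem \ref{thm:mainnow} does not rely on this proposition), and your numerics check out. Your route is slightly more economical --- one cohomology group plus Riemann--Roch instead of two $h^0$ computations and the difference formula --- and it is consistent with the paper's own later remark in Proposition \ref{prop:firstk} that the $k=1$ case is immediate from the vanishing of $D^2$ on $U\otimes S^dU$; the paper's route has the mild advantage of staying entirely inside the $h^0$ bookkeeping set up in parts (1)--(2) and never invoking $h^1$ or duality.
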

 \begin{proof} Facts (1) and (2) are easy and well known. The relations (3) and (4) coincide with formulas (\ref{eq:c_i}) and therefore have already been proven. 
 
 From Proposition \ref{thm:relTpTp_ast} we have the identification
 $$H^0\N_f(-d-2)=\{x\otimes f_1+y\otimes f_2\in U\otimes S^{d-1}U\ |\ xf_1+yf_2\in T\}$$ and therefore we see
 that \begin{equation}\label{eq:k0}H^0N_f(-d-2)\cong m^{-1}(T)\subset U\otimes S^{d-1}U,\end{equation} 
 where $m$ is the multiplication map $m:U\otimes S^{d-1}U\to S^dU$. Now, the kernel of $m$ is given by tensors of the form $x\otimes yh-y\otimes xh$, with arbitrary $h\in S^{d-2}U$.  Then one has 
 \begin{equation}\label{eq:k=0}h^0\N_f(-d-2)=\dim m^{-1}(T)=d-1+\dim T=d+e.\end{equation}
 On the other hand, by (\ref{eq:c_i}) we know $$d+e=h^0\N_f(-d-2)=\sum_{i:c_i\geq 0}(c_i+1)\geq \sum_{i=1}^{s-1}(c_i+1)=d+e$$
  This implies that $c_1\geq\cdots\geq c_{s-1}\geq -1$.
 We will also need to know the value of $h^0\N_f(-d-1)$. This is obtained from the exact sequence (\ref{eq:normal}), from which it easily follows $H^0\N_f(-d-1)\cong U\otimes H^1\OO_{\PP^1}(-d)$ and hence $h^0\N_f(-d-1)=2(d-1)$. Now, applying (2) for $k=-1$ and using (3), (4) and the above calculation of $f(1)=h^0\N_f(-d-1)$, we see that $\#\{i\ |\ c_i=-1\}=2(d-1)-2(d+e)+2(e+1)=0$, which completes the proof of (5).
   \end{proof}

  \subsection{Completion of the proof of Theorem \ref{thm:mainnow}}
  \begin{proof}[Proof of Theorem \ref{thm:mainnow}] We start with the part of the statement about $\T_f$. At the beginning of p. 1334,  section 6.2 of \cite{alzati-re} we showed the equality
  $$h^0\T_f(-d-2-k)=\dim\partial^{-k}T.$$ Moreover from Corollary \ref{cor:kerD^2} we know that $p_k(S^{d+k}U)=\ker D\subset S^kU\otimes S^dU$.  Then one finds that \begin{eqnarray*}\ker D\cap (S^kU\otimes T)&=&p_k(S^{k+d}U)\cap (S^kU\otimes T)\\
  &=&p_k(\{f\in S^{d+k}U\ |\ \partial_x^{k-i}\partial_y^i(f)\in T,\ \forall\ i=0,\ldots,k\})\\
  &\cong &\partial^{-k}T.\end{eqnarray*} 
  Hence we find the equality $h^0\T_f(-d-2-k)=\dim(\ker D\cap (S^kU\otimes U))$.
  
   Now we prove the statement about $\N_f$. 
 By proposition \ref{thm:relTpTp_ast} we know 
  $$H^0\N_f(-d-2-k)=T_k$$ with 
  $T_k\subseteq U\otimes S^{d+k-1}U$ the subspace consisting of those elements $x\otimes f_0+y\otimes f_1$ such that $xP(f_0)+yP(f_1)\in T$ for any $P\in S^kU^\ast$. This is equivalent to the condition
  $$x\partial_x^{k-i}\partial_y^i(f_0)+y\partial_x^{k-i}\partial_y^i(f_1)\in T,\ \forall\ i=0,\ldots,k.$$
  Recall that by the formula (\ref{eq:pkformula}) one has 
  \begin{eqnarray*} \psi_k(x\otimes f_0+y\otimes f_1)&=&\widetilde{m}(x\otimes p_k(f_0)+y\otimes p_k(f_1))\\
  &=&\operatorname{const}\cdot \sum_{i=1}^k{k\choose i}x^{k-i}y^i\otimes (x\partial_x^{k-i}\partial_y^i(f_0)+y\partial_x^{k-i}\partial_y^i(f_1)).\end{eqnarray*} 
  Therefore, by the definition of $T_k$, we have 
  \begin{eqnarray*}x\otimes f_0+y\otimes f_1\in T_k&\iff& x\partial_x^{k-i}\partial_y^i(f_0)+y\partial_x^{k-i}\partial_y^i(f_1)\in T\ \forall\ i=0,\ldots,k\\
  &\iff&\psi_k(x\otimes f_0+y\otimes f_1)\in S^kU\otimes T.\end{eqnarray*}
   On the other hand, by Proposition \ref{prop:psik}, one has $\psi_k(x\otimes f_0+y\otimes f_1)\in \operatorname{Im}(\psi_k)=\ker D^2$ and
   $\psi_k$ is injective for $k\geq 1$.
  Hence for any $k\geq 1$ one has  $$ H^0\N_f(-d-2-k)\cong T_k\stackrel{\psi_k}\cong \ker D^2\cap (S^kU\otimes T).$$
   \end{proof}
\section{Some general consequences of Theorem \ref{thm:mainnow}.}\label{section:genconsequences}
\subsection{$h^0\N_f(-d-2-k)$ for $k=0,1,2$.}
\begin{prop}\label{prop:firstk} 
 The spaces $H^0\N_f(-d-2-k)$ have the following dimensions, for $k=0,1,2$. 
\[\begin{array}{lllll}   k=0, & & h^0\N_f(-d-2)&=&d-1+\dim T.\\
   k=1, & &h^0\N_f(-d-3)&=&2\dim T.\\
   k=2, & & h^0\N_f(-d-4)&=&3\dim T-\dim \partial^2T.
   \end{array}\]
   \end{prop}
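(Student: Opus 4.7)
The plan is to treat each of the three cases separately, in each case invoking Theorem \ref{thm:mainnow} to convert the cohomology computation into the dimension of the intersection $\ker D^2 \cap (S^kU\otimes T)$, and then analysing this intersection by elementary means.

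For $k=0$, nothing new needs to be proven: this equality was already established during the proof of Proposition \ref{prop:seconddiff}, see in particular equation (\ref{eq:k=0}), where $h^0\N_f(-d-2)$ was identified with $\dim m^{-1}(T)$ and computed to be $d-1+\dim T$. I would simply cite that calculation.

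For $k=1$, I would apply Theorem \ref{thm:mainnow} directly. Since $D^2:S^1U\otimes S^dU\to S^{-1}U\otimes S^{d-2}U$ has vanishing target, $D^2$ is the zero map and
$$\ker D^2\cap (U\otimes T)=U\otimes T,$$
which has dimension $2\dim T$, as required.

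For $k=2$, Theorem \ref{thm:mainnow} gives $h^0\N_f(-d-4)=\dim(\ker D^2\cap (S^2U\otimes T))$, and the rank-nullity theorem applied to the restriction $D^2|_{S^2U\otimes T}$ reduces the problem to showing that
$$D^2(S^2U\otimes T)=\partial^2 T=\partial_x^2 T+\partial_x\partial_y T+\partial_y^2 T.$$
A short direct computation on the basis $x^2,xy,y^2$ of $S^2U$ (using $D=\partial_x\otimes\partial_y-\partial_y\otimes\partial_x$) yields
$$D^2(x^2\otimes t_1+xy\otimes t_2+y^2\otimes t_3)=2\bigl(\partial_y^2 t_1-\partial_x\partial_y t_2+\partial_x^2 t_3\bigr),$$
and as $t_1,t_2,t_3$ range over $T$ the right-hand side sweeps out exactly $\partial^2 T$. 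Hence
$$h^0\N_f(-d-4)=\dim(S^2U\otimes T)-\dim\partial^2 T=3\dim T-\dim\partial^2 T.$$

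The only step that requires real work is the direct calculation of $D^2$ on $S^2U\otimes T$ and the identification of its image with $\partial^2 T$; this is elementary, and the rest of the argument is bookkeeping via rank-nullity and the cases where the target of $D^2$ happens to vanish.
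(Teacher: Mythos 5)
Your proposal is correct and follows essentially the same route as the paper: the $k=0$ case is cited from equation (\ref{eq:k=0}), the $k=1$ case follows from the vanishing of the target of $D^2$ on $U\otimes S^dU$, and the $k=2$ case reduces via Theorem \ref{thm:mainnow} and rank–nullity to the identity $D^2(S^2U\otimes T)=\partial^2T$, verified by the same direct computation (your explicit formula $2(\partial_y^2t_1-\partial_x\partial_y t_2+\partial_x^2t_3)$ is in fact the sign-correct version of the one displayed in the paper, though the discrepancy is immaterial for the image). No gaps.
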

\begin{proof} The case $k=0$ is the formula (\ref{eq:k=0}) and it has already been discussed. 

The case $k=1$ is a consequence of the degree of $\N_f$ and it was already established by the formulas (\ref{eq:c_i}), but it also follows from the fact that $D^2=0$ on the space $U\otimes S^dU$ and therefore, by Theorem \ref{thm:mainnow}, one has $H^0\N_f(-d-3)\cong U\otimes T$.

Finally, for $k=2$, by Theorem \ref{thm:mainnow} we have to compute $\dim((S^2U\otimes T)\cap\ker D^2)=\dim\ker D^2|_{S^2U\otimes T}$. Note that $\dim (S^2U\otimes T)=3\dim T$ and hence the claim on $h^0\N_f(-d-4)$ follows if we show that $D^2(S^2U\otimes T)=\partial^2T$. We know  that $$D^2((ax^2+bxy+cy^2)\otimes \tau)=2a\tau_{xx}-2b\tau_{xy}+2c\tau_{yy}.$$ By choosing $\tau \in T$ and $a,b,c$ appropriately, one sees that $\tau_{xx},\tau_{xy},\tau_{yy}\in D^2(S^2U\otimes T)$ and since these elements generate $\partial^2T$ one obtains $\partial^2T\subseteq D^2(S^2U\otimes T)$. The converse inclusion is obvious. \end{proof}
\begin{cor}\label{cor:d+2} The number of summands equal to $\OO_{\PP^1}(d+2)$ in the splitting type (\ref{eq:normalsplitting}) of $\N_f$ is equal to $d-1-\dim\partial^2T$. \end{cor}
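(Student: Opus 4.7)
The plan is to read off the number of summands equal to $\OO_{\PP^1}(d+2)$ as the count $\#\{i : c_i=0\}$ in the splitting (\ref{eq:normalsplitting}), and then evaluate this count via the second-difference formula of Proposition \ref{prop:seconddiff}(2) applied at $k=0$. Concretely, I would set $f(-k):=h^0\N_f(-d-2-k)$ and write
\[
\#\{i:c_i=0\}=\Delta^2 f(0)=f(0)-2f(-1)+f(-2)=h^0\N_f(-d-2)-2h^0\N_f(-d-3)+h^0\N_f(-d-4).
\]

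The next step is a direct substitution using the three values computed in Proposition \ref{prop:firstk}, namely $h^0\N_f(-d-2)=d-1+\dim T$, $h^0\N_f(-d-3)=2\dim T$ and $h^0\N_f(-d-4)=3\dim T-\dim\partial^2T$. Plugging these in gives
\[
(d-1+\dim T)-2(2\dim T)+(3\dim T-\dim\partial^2T)=d-1-\dim\partial^2T,
\]
which is exactly the claimed formula.

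There is essentially no obstacle here: everything reduces to an arithmetic cancellation of the $\dim T$ terms once the three dimension formulas of Proposition \ref{prop:firstk} are in hand. The only thing to be slightly careful about is that Proposition \ref{prop:seconddiff}(2) requires the correct value of $f(-k)$ at $k=-1$ as well (i.e.\ $h^0\N_f(-d-1)$); but this is not needed for $k=0$, since $\Delta^2 f(0)$ involves only $f(0)$, $f(-1)$, $f(-2)$, all of which are supplied by Proposition \ref{prop:firstk}. Hence the corollary follows in one line from these two propositions.
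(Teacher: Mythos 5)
Your proof is correct and is essentially identical to the paper's: the authors also obtain the corollary by applying the second-difference formula of Proposition \ref{prop:seconddiff} at $k=0$ together with the three dimension values of Proposition \ref{prop:firstk} (the paper's citation of item (5) there is evidently a slip for item (2), which is the formula you use). The arithmetic cancellation of the $\dim T$ terms is exactly as you present it.
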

\begin{proof} This follows immediately from Proposition \ref{prop:seconddiff}, (5) applied to $k=0$ and using the dimensions computed in Proposition \ref{prop:firstk}.\end{proof}
 
\subsection{Some general results on $h^0\N_f(-d-2-k)$ with $k\geq 3$.}
The computation of kernels and  images of the maps $$D^2:S^kU\otimes T\to S^{k-2}U\otimes S^{d-2}U$$ for $k\geq 3$ may be not easy for a arbitrary $T$. Sometimes one can reduce this computation to the case of subspaces of smaller dimension. This is possible by means of the following easy lemma.
   \begin{lm}\label{lm:D2reduction} Assume that for a given decomposition $T=T_1\oplus T_2$ one also has $\partial^2T=\partial^2T_1\oplus\partial^2T_2$. Then  for any $k\geq 2$ the map $D^2: S^kU\otimes T\to S^{k-2}U\otimes S^{d-2}U$ is the direct sum of its restrictions to $S^kU\otimes T_i$, for $i=1,2$. In particular its rank is the sum of the ranks of the two restrictions.\end{lm}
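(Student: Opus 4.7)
The plan is to unwind what "direct sum of its restrictions" should mean at the level of kernels and images, and to exploit the fact that the second tensor factor of $D^2$ acts purely by derivation on $T$.

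First I would note the automatic decomposition $S^kU\otimes T=(S^kU\otimes T_1)\oplus(S^kU\otimes T_2)$ coming from $T=T_1\oplus T_2$; so the content of the lemma is that, under the hypothesis on $\partial^2$, the images of $D^2$ on the two summands also decompose in a direct sum inside $S^{k-2}U\otimes S^{d-2}U$. The key observation is that writing $D^2=\partial_x^2\otimes\partial_y^2-2\partial_x\partial_y\otimes\partial_x\partial_y+\partial_y^2\otimes\partial_x^2$, in the tensor $D^2(\sigma\otimes\tau)$ with $\sigma\in S^kU$ and $\tau\in T_i$ the second tensor factor is always one of $\partial_y^2\tau,\partial_x\partial_y\tau,\partial_x^2\tau$, so it lies in $\partial^2T_i$. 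Therefore
\[D^2(S^kU\otimes T_i)\ \subseteq\ S^{k-2}U\otimes\partial^2T_i,\quad i=1,2.\]

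Now I would use the hypothesis $\partial^2T=\partial^2T_1\oplus\partial^2T_2$: tensoring with $S^{k-2}U$ preserves direct sums, so the two subspaces $S^{k-2}U\otimes\partial^2T_1$ and $S^{k-2}U\otimes\partial^2T_2$ of $S^{k-2}U\otimes S^{d-2}U$ meet only in zero. Consequently the subspaces $D^2(S^kU\otimes T_1)$ and $D^2(S^kU\otimes T_2)$ meet only in zero as well, which gives
\[\operatorname{Im}\bigl(D^2|_{S^kU\otimes T}\bigr)\ =\ D^2(S^kU\otimes T_1)\ \oplus\ D^2(S^kU\otimes T_2).\]
For the kernel: if $v=v_1+v_2$ with $v_i\in S^kU\otimes T_i$ and $D^2v=0$, then $D^2v_1=-D^2v_2$ lies in $(S^{k-2}U\otimes\partial^2T_1)\cap(S^{k-2}U\otimes\partial^2T_2)=0$, forcing $v_i\in\ker D^2|_{S^kU\otimes T_i}$. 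Hence
\[\ker\bigl(D^2|_{S^kU\otimes T}\bigr)\ =\ \ker\bigl(D^2|_{S^kU\otimes T_1}\bigr)\ \oplus\ \ker\bigl(D^2|_{S^kU\otimes T_2}\bigr),\]
and in particular the rank of $D^2|_{S^kU\otimes T}$ is the sum of the ranks of the two restrictions, by rank-nullity (or directly from the image decomposition).

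There is no real obstacle here; the only subtlety is being careful that the compatibility of the direct sum decompositions on domain and image is controlled precisely by the hypothesis $\partial^2T=\partial^2T_1\oplus\partial^2T_2$, which is exactly what rules out cancellations between $D^2(S^kU\otimes T_1)$ and $D^2(S^kU\otimes T_2)$ in the target.
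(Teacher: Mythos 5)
Your argument is correct and is exactly the paper's: the key point in both is that $D^2(S^kU\otimes T_i)\subseteq S^{k-2}U\otimes\partial^2T_i$, so the hypothesis $\partial^2T=\partial^2T_1\oplus\partial^2T_2$ forces the images (and hence kernels) to decompose. The paper states this in one line as ``immediate''; you have simply filled in the routine details.
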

   \begin{proof} Immediate, since the image of $\operatorname{res}(D^2): S^kU\otimes T_i\to S^{k-2}U\otimes S^{d-2}U$ is contained in $S^{k-2}U\otimes \partial^2T_i$ for $i=1,2$.\end{proof}
   From the Lemma above one deduces the following result.
   \begin{prop}\label{prop:nosovrapp} Assume that $T=\partial^{b_1}(f_1)\oplus\cdots\oplus\partial^{b_r}(f_r)$ of type $(b_1,\ldots,b_r)$ and that $\partial T$ has type $(b_1+1,\ldots, b_r+1)$. Let us denote $$D_i^2: S^kU\otimes \partial^{b_i}(f_i)\to S^{k-2}U\otimes \partial^{b_i+2}(f_i)$$ the restriction of $D^2$, for any $i=1,\ldots,r$. Then the maps $D_i^2$ have maximal rank for any $i=1,\ldots,r$ and the rank of $D^2:S^kU\otimes T\to S^{k-2}U\otimes S^{d-2}U$ is the sum of their ranks. \end{prop}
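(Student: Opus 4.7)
The plan is to prove the two claims in sequence: first, that the rank of $D^2$ on $S^kU\otimes T$ equals $\sum_i \operatorname{rk}(D_i^2)$, and second, that each individual $D_i^2$ has maximal rank.

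For the first claim I would apply Lemma~\ref{lm:D2reduction} iteratively to the decomposition $T=\bigoplus_{i=1}^r\partial^{b_i}(f_i)$, which requires verifying the matching target decomposition $\partial^2 T=\bigoplus_i \partial^{b_i+2}(f_i)$. Since by hypothesis $\partial T$ has type $(b_1+1,\ldots,b_r+1)$, Proposition~\ref{thm:main1} applied to $\partial T$ gives $\dim\partial^2 T-\dim\partial T=r$, so $\dim\partial^2 T=\sum_i(b_i+2)+r=\sum_i(b_i+3)$. On the other hand, $\partial^2 T=\sum_i \partial^{b_i+2}(f_i)$ as a sum of subspaces with each summand of dimension at most $b_i+3$. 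The equality of dimensions therefore forces both $\dim\partial^{b_i+2}(f_i)=b_i+3$ for every $i$ and the sum to be direct, and Lemma~\ref{lm:D2reduction} can then be peeled off one summand at a time to conclude.

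For the second claim, I would identify each $\partial^{b_i}(f_i)$ with $S^{b_i}U^\ast$ through $\phi_i:P\mapsto P(f_i)$; since the hypothesis $f_i\notin\operatorname{Sec}^{b_i}C_{d+b_i}$ in Proposition~\ref{thm:main1} forces $\dim\partial^{b_i}(f_i)=b_i+1$, the map $\phi_i$ is an isomorphism, and analogously $\phi_i':S^{b_i+2}U^\ast\to\partial^{b_i+2}(f_i)$, $Q\mapsto Q(f_i)$, is an isomorphism by the dimension count obtained in the previous paragraph. The elementary identity $(\partial_x^\alpha\partial_y^\beta P)(f_i)=\partial_x^\alpha\partial_y^\beta (P(f_i))$ then shows that $\mathrm{id}\otimes\phi_i$ and $\mathrm{id}\otimes\phi_i'$ intertwine $D_i^2$ with the map of formula (\ref{eq:maxrkmap}), since multiplication of $S^{b_i}U^\ast$ by $\partial_x^\alpha\partial_y^\beta$ corresponds under $\phi_i,\phi_i'$ to differentiation on $\partial^{b_i}(f_i)$. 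Maximality of the rank of $D_i^2$ is then immediate from Proposition~\ref{prop:maxrk}.

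The main obstacle is securing the target-side identification $\phi_i'$: a priori, the condition $f_i\notin\operatorname{Sec}^{b_i}C_{d+b_i}$ only controls derivatives of order $b_i$, and $\dim\partial^{b_i+2}(f_i)$ might well drop below $b_i+3$ for an arbitrary such $f_i$. It is precisely the global hypothesis that $\partial T$ has type $(b_1+1,\ldots,b_r+1)$ that forces each summand of $\partial^2 T$ to achieve maximal dimension, and once this is in hand both the direct-sum reduction of $D^2|_{S^kU\otimes T}$ and the reduction of each $D_i^2$ to the map of Proposition~\ref{prop:maxrk} are straightforward.
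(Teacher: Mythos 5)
Your proposal is correct and follows essentially the same route as the paper: reduce to the summands via Lemma~\ref{lm:D2reduction}, identify $\partial^{b_i}(f_i)\cong S^{b_i}U^\ast$ and $\partial^{b_i+2}(f_i)\cong S^{b_i+2}U^\ast$ via contraction against $f_i$, and invoke Proposition~\ref{prop:maxrk}. Your explicit dimension count showing that the hypothesis on the type of $\partial T$ forces $\dim\partial^{b_i+2}(f_i)=b_i+3$ and the directness of $\partial^2T=\bigoplus_i\partial^{b_i+2}(f_i)$ is exactly the point the paper leaves implicit, so the two arguments coincide in substance.
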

\begin{proof} In view of lemma \ref{lm:D2reduction} we only need to show that $D^2_i$ has maximal rank for any $i=1,\ldots,r$.  Note that by Proposition \ref{thm:main1} the assumption that the type of $\partial T$ is $(b_1+1,\ldots,b_r+1)$ in particular implies $\dim \partial^{b_i+2}(f_i)=b_i+3$ for all $i$, hence one has a isomorphism $S^{b_i+2}U^\ast\to  \partial^{b_i+2}(f_i)$ defined by $\Omega\mapsto \Omega(f_i)$ for any $\Omega\in S^{b_i+2}U^\ast$. Recall also that since $T$ has type $(b_1,\ldots,b_r)$ one knows $\dim \partial^{b_i}(f_i)=b_i+1$ and hence the isomorphism $S^{b_i}U^\ast\to \partial^{b_i}(f_i)$ defined in the same way as above. Under these isomorphisms the maps $D_i^2$ are identified  with the map (\ref{eq:maxrkmap}) with $b=b_i$ and hence, by Proposition  \ref{prop:maxrk}, they have maximal rank. \end{proof}
As an application of the result above, we compute the normal bundles of rational curves obtained from vertexes $T$ of the most special type, i.e. $T=\partial^e(g)$ with $g\in\PP^(S^{d+e})\setminus Sec^eC_{d+e}$.
\begin{prop}\label{prop:specialnb} If the curve $C\subset\PP^s$ is obtained from a vertex $T$ of numerical type $(e)$, that is $T=\partial^e(g)$ with $g\in\PP(S^{d+e})\setminus Sec^eC_{d+e}$, then $$\N_f= \OO_{\PP^1}^2(d+e+3)\oplus \OO_{\PP^1}^{d-e-4}(d+2).$$\end{prop}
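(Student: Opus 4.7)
The strategy is to compute $h^0\N_f(-d-2-k)$ for every relevant $k$ via Theorem~\ref{thm:mainnow}, and then extract the multiplicities of each value $c$ among the $c_i$ using the second-difference formula in Proposition~\ref{prop:seconddiff}(2). Since $T$ has numerical type $(e)$ with the single generator $g$, the plan is to apply Proposition~\ref{prop:nosovrapp} with $r=1$, $b_1=e$, $f_1=g$. This gives $\dim T = e+1$, $\dim\partial T = e+2$, $\dim\partial^2 T = \dim\partial^{e+2}(g) = e+3$, and supplies, for every $k\geq 2$, the key fact that
$$D^2 : S^kU\otimes T \longrightarrow S^{k-2}U\otimes \partial^{e+2}(g)$$
has maximal rank.

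Since the source has dimension $(k+1)(e+1)$ and the target $(k-1)(e+3)$, with difference $(k+1)(e+1)-(k-1)(e+3)=2(e+2-k)$, Theorem~\ref{thm:mainnow} yields
$$h^0\N_f(-d-2-k) \;=\; \max\bigl(0,\,2(e+2-k)\bigr)\qquad\text{for all }k\geq 1.$$
One cross-checks this against Proposition~\ref{prop:firstk} for $k=1,2$, which give $2(e+1)$ and $3(e+1)-(e+3)=2e$ respectively, matching the formula above. The remaining input value is the separately known $f(0) = h^0\N_f(-d-2) = d+e$ from Proposition~\ref{prop:firstk}.

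Finally, setting $f(-k) = h^0\N_f(-d-2-k)$ and applying the second-difference formula $\#\{i:c_i = c\} = f(-c) - 2f(-c-1) + f(-c-2)$, the affine behaviour $f(-k) = 2(e+2-k)$ on its support $1\leq k\leq e+1$ forces the second difference to vanish at every interior value $1\leq c\leq e$, while the two boundary contributions give
$$\#\{i : c_i = e+1\} = 2 - 0 + 0 = 2,\qquad \#\{i : c_i = 0\} = (d+e) - 4(e+1) + 2e = d-e-4.$$
These account for exactly the $s-1 = d-e-2$ summands of $\N_f$ and produce the stated splitting $\N_f = \OO^2_{\PP^1}(d+e+3) \oplus \OO^{d-e-4}_{\PP^1}(d+2)$. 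The step I expect to be most delicate is the verification of the dimensional statements $\dim\partial^{e+1}(g) = e+2$ and $\dim\partial^{e+2}(g) = e+3$, and thereby the full hypothesis of Proposition~\ref{prop:nosovrapp}, as these rest on the regularity of the successive catalecticant matrices of $g$ under the assumption $g\notin Sec^eC_{d+e}$.
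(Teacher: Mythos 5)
Your proof is correct and follows essentially the same route as the paper: apply Proposition \ref{prop:nosovrapp} to get $h^0\N_f(-d-2-k)=\max(0,2e+4-2k)$ for $k\geq 1$, then read off the multiplicities of the $c_i$ from the second differences via Proposition \ref{prop:seconddiff}. The one delicate point you rightly flag --- that $\partial T=\partial^{e+1}(g)$ must have type $(e+1)$, equivalently $g\notin Sec^{e+1}C_{d+e}$, in order for the hypothesis of Proposition \ref{prop:nosovrapp} to hold --- is passed over silently in the paper's own one-line invocation of that proposition, so your version is if anything slightly more careful.
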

 \begin{proof} One can apply Proposition \ref{prop:nosovrapp} and find
$$h^0\N_f(-d-2-k)=\max(0, (k+1)(e+1)-(k-1)(e+3))=\max (0,2e+4-2k).$$ Setting,  as in Proposition \ref{prop:seconddiff}, $f(-k)=h^0\N_f(-d-2-k)$ for $k\geq 0$, we see that the sequence $f(-k)$ is the following one:
$$ d+e, 2e+2,2e,\ldots, 2,0\cdots$$ Its second difference is the following:
$$d-e-4,0,\ldots,0,2, 0\cdots$$ where the last $2$ appears at the place $k=e+1$. Hence, by Proposition \ref{prop:seconddiff}, one has $(c_1,\ldots,c_{s-1})=(e+1,e+1,0,\ldots,0)$, with $s-1=d-e-2$. By formula (\ref{eq:normalsplitting}), we obtain the stated splitting type of $\N_f$.
\end{proof} 
\section{Example of a reducible Hilbert scheme of rational curves with fixed normal bundle: $\HH_{\overline{c}}$ with ${\overline{c}}=(2,2,1,1,0,0,0)$}\label{sec:counterexample} 
This section is dedicated to the construction of the first known example, to our knowledge, of a reducible Hilbert scheme of rational curves with a given splitting type of the normal bundle.

As in the introduction, we will denote $\HH_{\overline{c}}$ the Hilbert scheme of degree $d$ irreducible, non degenerate rational curves in $\PP^s$, with ordinary singularities and with normal bundle with splitting type $\bigoplus\OO_{\PP^1}(c_i+d+2)$.
We will consider the case when ${\overline{c}}=(2,2,1,1,0,0,0)$, therefore we have $s-1=7$, moreover from $\sum(c_i+1)=13=d+e$ and $\sum c_i=6=2(e+1)$ we get $e=2$ and $d=11$, i.e. we are dealing with rational curves of degree $11$ in $\PP^8$. Precisely, we are dealing with parametrized curves of degree $11$ in $\PP^8$ with splitting type of the normal bundle given by
$$\N_f=\OO_{\PP^1}^2(15)\oplus \OO_{\PP^1}^2(14)\oplus \OO_{\PP^1}^3(13).$$ These curves are obtained, up to a projective transformation in $\PP^8$,  as projection of the rational curve $C_{11}=\nu_{11}(\PP^1)\subseteq\PP(S^{11}U)$ from a $2$-dimensional vertex $\PP(T)$, so that $$\dim T=e+1=3.$$
 We recall that the knowledge of the $s-1$-uple $(c_1,\ldots,c_{s-1})$ is equivalent to the knowledge of the dimensions of the spaces $H^0\N_f(-d-2-k)= T_k$. In our case these dimensions are the following:
\begin{eqnarray*} \dim T_0&=&\sum_{i:c_i\geq 
0}(c_i+1)=13\\
\dim T_1&=&\sum_{i:c_i\geq 1}c_i=6\\
\dim T_2&=&\sum_{i:c_i\geq 2}(c_i-1)=2\\
\dim T_3&=&\sum_{i:c_i\geq 3}(c_i-2)=0\\
\dim T_k&=&0\ \forall\ k\geq 3.\end{eqnarray*}
We also recall that $T_k\cong \ker (D^2:S^kU\otimes T\to S^{k-2}U\otimes \partial^2T)$ for all $k\geq 1$.
\noindent
Since the vertex $\PP(T)$ must not intersect $C_{11}$, we have only three possibilities for the numerical type of $T$, namely the type $(2)$, the type $(1,0)$ and the type $(0,0,0)$. We can immediately rule out the type $(2)$ by the following argument. By Proposition \ref{prop:firstk} one has \begin{equation}\label{eq:dimd2T}\dim \partial^2T=\dim S^2U\otimes T-\dim T_2=7.\end{equation} If $T$ is of type $(2)$, then $T=\partial^2(f)$ for some polynomial $f\in S^{13}U$ and hence $\partial^2T=\partial^4(g)$, which has dimension at most $5$. Therefore we are left with the possibility that $T$ has type $(1,0)$ or type $(0,0,0)$. 
\subsection{Curves from spaces $T$ of type $(1,0)$}
We will show that from  a general vertex $T$ of type $(1,0)$ we always obtain a curve with splitting of the normal bundle corresponding to $\overline{c}=(2,2,1,1,0,0,0)$. Recall such a vertex has the form 
$$T=\partial(f)\oplus \langle g\rangle,$$ with sufficiently general $f\in\PP(S^{12}U)$ and $g\in \PP(S^{11}U)$, this last polynomial being determined by $T$ up to an element of $\partial(f)$. Hence the dimension of the space of such $T$'s is given by $\dim\PP(S^{12}U)+\dim \PP(S^{11}U/\partial(f))=12+9=21$.  The same conclusion can be reached by means of the dimension formula provided by Theorem 2 of \cite{alzati-re}.
 
Now we know that a general $T\subset S^{11}U$ of type $(1,0)$ has $\partial T$ of type $(2,1)$. This may be shown starting from a particular $T$, for example $T=\langle x^3y^8, x^4y^7, x^7y^4\rangle=\partial(x^4y^8)\oplus(x^7y^4)$, from which we get the direct sum decompositions  $\partial T=\partial^2(x^4y^8)\oplus\partial(x^7y^4)$ and $\partial^2 T=\partial^3(x^4y^8)\oplus\partial^2(x^7y^4)$. Then one can extend the result to a general $T$ of type $(1,0)$ by lower semicontinuity of $\dim\partial^2T$.   Hence for a general $T$ of type $(1,0)$ we find $\dim\partial^2T=\dim\partial T+2=7$, as required by (\ref{eq:dimd2T}). In particular one obtains $\partial^2T=\partial^3(f)\oplus\partial^2(g)$ and for any $k\geq 2$ the map $D^2:S^kU\otimes T\to S^{k-2}U\otimes \partial^2T$ can be written as the direct sum of the maps \[
\begin{array}{l} D^2:S^kU\otimes \partial(f)\to S^{k-2}U\otimes \partial^3(f)\\

D^2:S^kU\otimes (g)\to S^{k-2}U\otimes \partial^2(g).\end{array}\]
By construction one has $\dim\partial(f)=2$, $\dim\partial^3(f)=4$, $\dim(g)=1$ and $\dim\partial^2(g)=3$,  hence one can identify $S^iU^\ast \cong \partial^i(f)$ for $i=1,3$ and $S^jU^\ast\cong \partial^j(g)$ for $j=0,2$. By means of these identifications the maps above become 
\[\begin{array}{l} D^2:S^kU\otimes U^\ast\to S^{k-2}U\otimes S^3U^\ast\\

D^2:S^kU\otimes S^0U^\ast\to S^{k-2}U\otimes S^2U^\ast,\end{array}\]
where now $D^2$ operates as in Proposition \ref{prop:maxrk}.  Hence the maps have maximal rank.
 For $k=3$ the map $D^2:S^3U\otimes \partial(f)\to U\otimes \partial^3(f)$ has domain of dimension $8$ and codomain of dimension $8$, hence is an isomorphism. The map
$D^2:S^3U\otimes (g)\to U\otimes \partial^2(g)$ has domain of dimension $4$ and codomain of dimension $6$, hence it is injective. In conclusion we obtain $T_3=0$, and hence also $T_k=0$ for all $k\geq 3$. So we get the dimensions of the spaces $T_k$ that correspond to $\overline{c}=(2,2,1,1,0,0,0)$. By Proposition \ref{prop:dimH} we have obtained a irreducible subscheme of $\HH_{\overline{c}}$ of dimension $21+\dim PGL(9)-\dim PGL(2)=98$. 

We observe that the general curve in the subscheme of $\HH_{\overline{c}}$ just defined is a smooth rational curve. Indeed this is equivalent to showing that a general $\PP(T)$ with $T$ of type $(1,0)$ as above does not intersect $Sec^1C_{11}$. Let us fix $[g]\in\PP^{11}\setminus Sec^1C_{11}$, then the dimension of the cone over $Sec^1C_{11}$ with vertex $[g]$,  defined as the join $J=J ([g], Sec^1C_{11})$, is $\dim J=4$. Let us define $$J'=\{[f']\in \PP(S^{12}U)\ |\ \exists\ \omega\in U^\ast :\ [\omega (f')]\in J\}.$$ Then one finds $\dim J'\leq 6$, indeed $J'=\bigcup_{q\in J, \omega\in \PP(U^\ast)}\PP(\omega^{-1}(q))$,   and therefore there exists $[f]\in \PP^{12}\setminus J'$. Then one can conclude that for a general $T=\partial(f)\oplus \langle g\rangle$ one has $$\PP(T)\cap Sec^1C_d=\emptyset.$$
\subsection{Curves from spaces $T$ of type $(0,0,0)$}
Unlike the previous case of $T$ of type $(1,0)$, it will not be true that a general $T\subseteq S^{11}U$ of type $(0,0,0)$ can produce a rational curve in $\HH_{\overline{c}}$. Instead, we will show that the space of all $T$ of type $(0,0,0)$ whose general element produces curves in $\HH_{\overline{c}}$ is a proper irreducible subvariety of the space of all $T$ of type $(0,0,0)$. 

Now we have $\dim \partial T=\dim T+3=6$ and remind that to obtain a curve in $\HH_{\overline{c}}$ one must have $\dim\partial^2T=7$. Hence, under the notations of Proposition \ref{thm:main1}, the space $\partial T$ has type $(a,b_1)$ with $\dim \partial T=a+1+b_1+1=6$, i.e. $(a,b_1)=(a,4-a)$. 
\paragraph{\bf Case $a=-1$.} One has $a=-1$ if and only if $\PP(\partial T)$ does not intersect $C_{10}\subset \PP(S^{10}U)$, so we see that $\partial T$ has type $(b_1)=(5)$, i.e. $\partial T=\partial^5(g)$ for some $[g]\not\in Sec^5C_{15}\subset \PP^{15}$ and hence $\partial^2T=\partial^6(g)$ has dimension $7$, as required.  

We compute the dimension of the variety of the spaces $T$ under consideration. We observe that for a fixed general 
$[g]\in\PP(S^{15}U)$, any sufficiently general $T\subseteq \partial^{-1}T=\partial^4(g)$ will have type $(0,0,0)$ and $\partial T=\partial^5(g)$. One can first show the claim for a special couple $T,g$, for example $g=x^8y^7$ and $T=\langle x^4y^7, x^6y^5,x^8y^3\rangle$. Then the result holds for a general $T,g$ by semicontinuity, more precisely by upper semicontinuity of $\dim \partial^{-1}T$, which is equal to $0$ if and only if $T$ has type $(0,0,0)$, by Proposition \ref{prop:partial-1T}. Hence we can find spaces $T$ meeting our requirements in a dense open set of $Gr(3,\partial^4(g))$, whose dimension is $\dim Gr(3,\partial^4(g))=6$.  Moreover, since a general $T\subset \partial^4(g)$ constructed as above is such that $\partial T=\partial^5(g)$, then $\langle g\rangle=\partial^{-5}(\partial T)$ is uniquely determined by $T$. Hence the final count of parameters for spaces $T$ as above is the following: $$\dim \PP(S^{15}U)+\dim Gr(3,5)=15+6=21.$$ 
\paragraph{\bf Case $a\geq 0$} By Proposition \ref{thm:main1} a general $T$ of type  $(a,4-a)$ has the form $$\partial T=\langle p_0^{10},\ldots,p_a^{10}\rangle\oplus\partial^{4-a}(g)$$ for a suitable $[g]\not\in Sec^{4-a}C_{14-a}\subset \PP(S^{14-a}U)$. Note that the $C_{10}$-generated part of $\partial T$ is uniquely determined by $\partial T$ and hence by $T$, i.e. the points $p_0,\ldots,p_a$ are uniquely determined. On the other hand $g$ is determined only modulo $W=\langle p_0^{14-a},\ldots,p_a^{14-a}\rangle$. We have  $$T\subseteq \partial^{-1}\partial T=\langle p_0^{10},\ldots,p_a^{10}\rangle\oplus\partial^{3-a}(g),$$ 
which is again a space of dimension $5$, uniquely determined by $T$. However now $[g]\in \PP(S^{14-a}U/W)$, which gives us $13-2a$ parameters. Hence a similar dimension count as above provides us with a number of parameters equal to $13-2a+a+1+\dim Gr(3,5)=20-a$ parameters. So in the case $a\geq 0$ we find a variety of vertexes $\PP(T)$ of smaller dimension then in the case $a=-1$. Since we are looking for components of $\HH_{\overline{c}}$ of maximal dimensions, we will be satisfied if we get one such component from the case $a=-1$.
\vskip2mm
So we reduced ourselves to show that a general $T$ of type $(0,0,0)$ with $\partial T$ of type $(5)$ produces a curve in $\HH_{\overline{c}}$. Note that from the known data $d=11$, $\dim T=3$ $\dim\partial^2T=7$ we already have $\dim T_0=d+\dim T=13$, $\dim T_1=2\dim T=6$, $\dim T_2=3\dim T-\dim\partial^2T=2$. From the caracterization of $\overline{c}=(2,2,1,1,0,0,0)$ in terms of the dimensions of the spaces $T_k$, we will get a curve in $\HH_{\overline{c}}$ from the vertex $T$ if and only if one has $\dim T_3=0$. By semicontinuity, if we show this for a special $T$ of type $(0,0,0)$ and $\partial T$ of type $(5)$, then the same will hold for the general such $T$. We take the same example as above.
$$g=x^8y^7,\ T=\langle x^8y^3, x^6y^5,x^4y^7\rangle.$$
\begin{notation} To simplify calculations we denote with $[h]$ any fixed non-zero rational multiple of the polynomial $h$. Similarly $[h]+[g]$ will denote a fixed linear combination of $h,g$ with non-zero rational coefficients.\end{notation}
We compute $ T_3$ as the kernel of $D^2:S^3U\otimes T\to U\otimes \partial^2T$. In particular we will get $T_3=0$ if we show that the image of that map has dimension $12$.  Recalling that $D^2=\partial_x^2\otimes\partial_y^2-2\partial_x\partial_y\otimes \partial_x\partial_y+\partial_y^2\otimes\partial_x^2$, we see the following:
\[\begin{array}{l}D^2(\langle x^3,x^2y,xy^2,y^3\rangle\otimes \langle x^8y^3 \rangle)=\\
 \langle [x\otimes x^8 y],[y\otimes x^8 y]+[x\otimes x^7y^2],[y\otimes x^7y^2]+[x\otimes x^6y^3],[y\otimes x^6y^3]\rangle,\\
 \\
 D^2(\langle x^3,x^2y,xy^2,y^3\rangle\otimes \langle x^6y^5 \rangle)=\\
 \langle [x\otimes x^6 y^3],[y\otimes x^6 y^3]+[x\otimes x^5y^4],[y\otimes x^5y^4]+[x\otimes x^4y^5],[y\otimes x^4y^5]\rangle,\\
 \\
 D^2(\langle x^3,x^2y,xy^2,y^3\rangle\otimes \langle x^4y^7 \rangle)=\\
\langle [x\otimes x^4 y^5],[y\otimes x^4 y^5]+[x\otimes x^3y^6],[y\otimes x^3y^6]+[x\otimes x^2y^7],[y\otimes x^2y^7]\rangle .
  \end{array}\] 
  The space $D^2(S^3\otimes T)$ is generated by the $12$ elements shown on the right hand sides of the equalities above.
  After taking suitable linear combinations of them, they are reduced to the following set of generators
  
 $$\begin{array}{l} \mbox{$[x\otimes x^8 y],\ [y\otimes x^8 y]+[x\otimes x^7y^2],\ [y\otimes x^7y^2],\ [y\otimes x^6y^3]$}\\
 \mbox{$[x\otimes x^6 y^3],\ [x\otimes x^5y^4],\ [y\otimes x^5y^4],\ [y\otimes x^4y^5]$}\\
 \mbox{$[x\otimes x^4 y^5],\ [x\otimes x^3y^6],\ [y\otimes x^3y^6]+[x\otimes x^2y^7],\ [y\otimes x^2y^7]$}.
 \end{array}$$
 After this simplification, one can easily see that the $12$ generators are linearly independent. This completes the proof that $T_3=0$.
 
 Finally we observe that in the given example of $T=\langle x^8y^3, x^6y^5,x^4y^7\rangle$ one has $$T^\perp=\langle u^{11},u^{10}v,u^9v^2,u^7v^4,u^5v^6,u^3v^8,u^2v^9,uv^{10},v^{11}\rangle$$ and since the elements of given basis of $T^\perp$ serve also as the components of a parametrization map $f=\pi_T\circ\nu_d:\PP^1\to\PP^s$, one easily sees that the parametrized curve is smooth. Hence also the general curve in the same component of $\HH_{\overline{c}}$ is smooth.
 \begin{conclusion} We have found that for $\overline{c}=(2,2,1,1,0,0,0)$ the Hilbert scheme $\HH_{\overline{c}}$ is the union of two irreducible components, each of dimension equal to $21+\dim PGL(9)-\dim PGL(2)=98$, by Proposition \ref{prop:dimH}. One component has general point representing a smooth rational curve constructed from a general vertex $T$ of type $(1,0)$ with $\partial T$ of type $(2,1)$. The other component has general point representing smooth rational curves constructed from a general vertex $T$ of type $(0,0,0)$ with $\partial T$ of type $(5)$. We  also observe that, by Proposition  \ref{prop:tangristr}, the restricted tangent bundles are the following (setting $d=11$):
\begin{eqnarray*} f^\ast\T_{\PP^s}&=&\OO_{\PP^1}(d+3)\oplus \OO_{\PP^1}(d+2)\oplus \OO_{\PP^1}^6(d+1),\  \mbox{for $T$ of type }(1,0);  \\
f^\ast\T_{\PP^s}&=&\OO_{\PP^1}^3(d+2)\oplus \OO_{\PP^1}^5(d+1),\  \mbox{for $T$ of type }(0,0,0); 
\end{eqnarray*}
 On the other hand, for any  $[C]\in \HH_{\overline{c}}$ one has
 $$\N_f=\OO_{\PP^1}^2(d+4)\oplus \OO_{\PP^1}^2(d+3)\oplus \OO_{\PP^1}^3(d+2).$$\end{conclusion}
 \begin{rmk} One may note that the decomposition type given above has the form $\N_f=\mathcal{F}\oplus \OO_{\PP^1}^3(d+2)$ with $\mathcal{F}=\OO_{\PP^1}^2(d+4)\oplus\OO_{\PP^1}^2(d+3)$ of {\em almost balanced type}, and hence $\N_f$ has the most general possible type among the  vector bundles on $\PP^1$ of the same rank and degree and with the summand $\OO_{\PP^1}^3(d+2)$.  Therefore the  same counterexample discussed in this section gives also the following.
 \begin{example} The variety parametrizing the rational curves of degree $d=11$ in $\PP^8$ with normal bundle $\N_f$ with $3$ summands of degree $d+2=13$ is reducible. \end{example}
 This is actually a counterexample to Theorem 4.8 of \cite{bernardi}. It seems that in the preparatory results leading to Theorem 4.8, especially Lemma 4.3, the author has overlooked his own more detailed treatment of the same results given in his Ph.D. thesis \cite{bernardiThesis}, where more restrictive hypotheses are given. In \cite{bernardiThesis} Theorem 4.8 of \cite{bernardi} is stated as Theorem 3.4.16, which in turn is deduced from Theorem 3.3.9 and Theorem 3.4.10. Our counterexample corresponds to the case $n=11$, $d=8$, $k=3$, $r=2$ and $\rho_r^{n,k}=3$ in the author's notation, and it is not covered by Theorems 3.3.9 and 3.4.10 of \cite{bernardiThesis}.
 \begin{comment} For example, in the proof of Lemma 4.3 of \cite{bernardi} the author apparently uses the following argument. Assume that, given $f_0,\ldots, f_k\in S^nU$, there exists some $[\phi]\in \PP(S^{n-2}U^\ast)$ such that $\phi(f_i)=0$ for all $i=0,\ldots,k$. Then, writing $\phi=\omega_1\cdots\omega_{n-2}$ and setting $[p_j]=[\omega_j^\perp]\in \PP(U)$, the polynomials $f_i$ should have simultaneous decompositions $f_i=\sum_{i=1}^{n-2}\lambda_{i,j}p_j^n$. But this is not true if the $[p_j]$ are not distinct. 
 \end{comment}
 \end{rmk}
 \section{smooth rational curves in rational normal scrolls}\label{sec:quadrics}
In this section we will  characterize smooth rational curves contained in rational normal scroll surfaces in terms of the splitting type of their restricted tangent bundles $\T_f$ and we will also compute the splitting type of their normal bundle $\N_f$. Our main result can be viewed as a generalization of  Propositions 5 and 6 of \cite{Eis-VdV1}, where the authors characterized smooth rational curves contained in a smooth quadric in $\PP^3$ by their restricted tangent bundle and computed their normal bundle. The general purpose of this section is to illustrate the idea that especially the splitting type of $\T_f$ may have a deep impact on the extrinsic geometry of the curve $C\subset\PP^s$.

\begin{notation} Following the notations of \cite{Hartshorne}, chapter 2, Section 7, we denote $\mathbf{P}(\mathcal{E})$ the projective bundle associated to a vector bundle $\mathcal{E}$ on $\PP^1$ or rank $t\geq 1$. Recall that an epimorphism of  vector bundles $\C^{s+1}\otimes\OO_{\PP^1}\to \mathcal{E}$ defines a regular map $g:\mathbf{P}(\mathcal{E})\to\PP^s$ such that, denoting $H$ the pullback of an hyperplane of $\PP^s$,  one has $\deg H^{t-1}=\deg\mathcal{E}=\deg \wedge^{t}\mathcal{E}$.  If the map $g:\mathbf{P}(\mathcal{E})\to\PP^s$ is birational to the image, then, setting $S=\operatorname{Im}(g)$, one finds $\deg S=\deg \mathcal{E}$. \end{notation}
Let $C\subset \PP^s$ be a smooth non-degenerate rational curve of degree $d$, biregularly parametrized by a map $f:\PP^1\to\PP^s$, which, as discussed in preceding sections, we can assume of the form $f=\pi_T\circ \nu_d$ up to a projective transformation of $\PP^s$. As usual we will set $\dim T=e+1$ and $s=d-e-1$. Throughout this section we will assume $s\geq 3$ and $d\geq s+1$, i.e. $T\not=0$. First we study a sufficient condition for $C$ to be smooth.
\begin{lm}\label{lm:smoothtypee} Let $T=\partial^e(g)$ be a vertex of type $(e)$. Then the curve $C=\pi_T(C_d)$ is smooth if and only if $g\in \PP(S^{d+e}U)\setminus Sec^{e+1}C_{d+e}$. \end{lm}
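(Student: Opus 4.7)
The plan is to convert smoothness of $C=\pi_T(C_d)$ into a condition on the apolar ideal of the binary form $g$, by analyzing when the vertex $\PP(T)$ meets the chordal variety $Sec^1C_d$.

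First I would reduce to a purely projective statement. Because $T$ has type $(e)$, Proposition \ref{thm:main1} gives $\PP(T)\cap C_d=\emptyset$ and $f=\pi_T\circ\nu_d$ is birational onto $C$; hence $C$ is smooth iff $f$ is a closed embedding. The map $f$ fails to be an embedding exactly when either two distinct points of $\PP^1$ collapse (a node) or $df$ degenerates at a point (a cusp), and these failures correspond respectively to $\PP(T)$ meeting a proper secant chord or a tangent line of $C_d$. Both kinds of lines lie in $Sec^1C_d$, so $C$ is smooth iff $\PP(T)\cap Sec^1C_d=\emptyset$.

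Next I would translate into apolarity. Set $R=\C[\partial_x,\partial_y]$ and let $\operatorname{Ann}(g)\subset R$ be the apolar ideal of $g$, a Gorenstein complete intersection. The contraction $\mu:S^eU^\ast\to S^dU$, $\omega\mapsto\omega(g)$, has image $T=\partial^e(g)$ and, by the type $(e)$ assumption, is an isomorphism, so $\operatorname{Ann}(g)_e=0$. Sylvester's classical theorem for binary forms identifies the border rank of $g$ with the least integer $a$ for which $\operatorname{Ann}(g)_a\neq 0$, and gives $g\in Sec^kC_{d+e}\iff a\leq k+1$. Hence the hypothesis $g\notin Sec^eC_{d+e}$ already forces $a\geq e+2$, and the desired conclusion $g\notin Sec^{e+1}C_{d+e}$ reads $\operatorname{Ann}(g)_{e+2}=0$. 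It therefore suffices to prove the equivalence $\PP(T)\cap Sec^1C_d\neq\emptyset\iff\operatorname{Ann}(g)_{e+2}\neq 0$.

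For $(\Rightarrow)$ I would pick a nonzero $v=\omega(g)\in T$ with $[v]\in Sec^1C_d$, write $v=\alpha p^d+\beta q^d$ (permitting $p=q$ for tangent directions), choose $\ell_p,\ell_q\in U^\ast$ vanishing at $p,q$, and observe that $\ell_p\cdot p^d=0=\ell_q\cdot q^d$ forces $(\ell_p\ell_q)\cdot v=0$; so $\ell_p\ell_q\,\omega\in\operatorname{Ann}(g)_{e+2}$, and it is nonzero because $R$ is a domain. For $(\Leftarrow)$ I would take a nonzero $\phi\in\operatorname{Ann}(g)_{e+2}$, factor it over $\C$ as $\phi=\ell_1\ell_2\cdots\ell_{e+2}$, and set $\omega=\ell_3\cdots\ell_{e+2}$; then $\omega(g)\neq 0$ by injectivity of $\mu$, and $\ell_1\ell_2\cdot\omega(g)=\phi(g)=0$, so $\omega(g)\in\ker(\ell_1\ell_2:S^dU\to S^{d-2}U)$, a two-dimensional subspace whose projectivization is a secant or tangent line of $C_d$.

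The main obstacle I expect is the tangent case. When $\ell_1$ and $\ell_2$ are proportional one must verify by a short Leibniz calculation that $\ker(\ell^2:S^dU\to S^{d-2}U)$ is exactly the tangent line to $C_d$ at $[p^d]$ (with $\ell(p)=0$), rather than some other $2$-plane; this is what lets the argument detect the full closed variety $Sec^1C_d$ and not just its open locus of honest chords. Everything else is formal manipulation of the apolarity pairing together with Sylvester's classical description of the apolar ideal of a binary form.
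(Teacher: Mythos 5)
Your argument is correct, but it takes a genuinely different route from the paper's. Both proofs begin with the same reduction, namely that $C$ is smooth if and only if $\PP(T)\cap Sec^1C_d=\emptyset$. From there the paper stays inside its own $\partial$-calculus: using that $\dim\partial T=\dim T+1$ forces $\PP(\partial T)$ to be the honest union $\bigcup_{\omega}\PP(\omega(T))$ rather than merely the join, it shows that $\PP(T)$ meets a chord or tangent line of $C_d$ exactly when $\PP(\partial T)$ meets $C_{d-1}$, and then invokes Proposition \ref{thm:main1} applied to $\partial T=\partial^{e+1}(g)$ to translate $S_{\partial T}=0$ into $[g]\notin Sec^{e+1}C_{d+e}$. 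You instead bypass $\partial T$ entirely and work with the apolar ideal of $g$, characterizing $\PP(T)\cap Sec^1C_d\neq\emptyset$ by the existence of a nonzero element of $\operatorname{Ann}(g)_{e+2}$ and then quoting the Sylvester/catalecticant description of secant varieties of $C_{d+e}$; in effect you peel off a quadratic factor $\ell_1\ell_2$ where the paper peels off a single linear factor and defers the rest to its structure theorem. What your version buys is self-containedness relative to classical apolarity and a very transparent identification of the chord (resp.\ tangent line) as $\ker(\ell_1\ell_2)$ (resp.\ $\ker(\ell^2)$); what the paper's version buys is uniformity with the rest of its machinery, since Proposition \ref{thm:main1} already encodes the catalecticant facts you are re-deriving. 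The one place you must tighten is the forward implication: a point of a tangent line is $\alpha p^d+\beta p^{d-1}q$, not $\alpha p^d+\beta q^d$ with $p=q$, so you should treat that case by the direct check $\ell_p^2(p^{d-1}q)=0$ rather than by the degenerate specialization of the chord formula --- exactly the Leibniz computation you flag at the end, so this is a presentational fix, not a gap.
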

 \begin{proof} Our strategy of proof will be to show that when $T$ has type $(e)$, the curve $C$ is smooth if and only if $\partial T$ has type $(e+1)$. Indeed, by Proposition \ref{thm:main1} one sees $\partial T=\partial^{e+1}(g)$ being of type $(e+1)$ is equivalent to $[g]\not\in Sec^{e+1}C_{d+e}$. Note that the point $[g]\in\PP(S^{d+e}U)$ such that $\partial T=\partial^{e+1}(g)$ has type $(e+1)$ is unique, since one sees that $\langle g\rangle=\partial^{-e-1}(\partial T)$, by iteratively applying Proposition \ref{prop:partial-1T}.
 
 The condition that $C$ is smooth is given by $\PP(T)\cap Sec^1C_d=\emptyset$.  Observe that $T$ being of type $(e)$ in particular implies $\PP(T)\cap C_d=\emptyset$ and $\dim\PP(\partial T)=\dim \PP(T)+1$. Hence the space $\PP(\partial T)$, that a priori is the join $\PP(\langle \omega(T)\ |\ [\omega]\in \PP(U^\ast)\rangle)$, in this case is also the union $\PP(\partial T)=\bigcup_{\omega\in U^\ast}\PP(\omega(T))$.  Then one has $\PP(\partial T)\cap C_{d-1}\not=\emptyset$ if and only if there exists $\omega \in U^\ast$ and $l\in U$ such that $[l^{d-1}]\in \PP(\omega(T))$. Setting $\langle m\rangle =\omega^\perp$, this is equivalent to say that in $\PP(T)$ either there exists an element of the form $[\lambda l^d+\mu m^d]$ if $[m]\not=[l]$, or an element of the form $[l^{d-1}n]$ if $[m]=[l]$. This is equivalent to the condition $\PP(T)\cap Sec^1C_d\not=\emptyset$, i.e. to $C$ not smooth. 
 
 Therefore we have shown that $C$ is smooth if and only if  $\PP(\partial T)\cap C_{d-1}=\emptyset$, i.e. $S_{\partial T}=0$, under the notations of Proposition \ref{thm:main1}.  Moreover, for $T=\partial^e(g)$,  one has $\partial T=\partial^{e+1}(g)$ and $\partial^2 T=\partial^{e+2}(g)$, hence $\dim\partial^2T-\dim\partial T\leq 1$.  Then, by Proposition \ref{thm:main1} applied to the space $\partial T$, we see that $C$ is smooth if and only if $\partial T$ has type $(e+1)$. \end{proof}
\begin{rmk} Note that the open set $\PP(S^{d+e}U)\setminus Sec^{e+1}C_{d+e}$ is non empty and of dimension $d+e=2d-s-1$ if and only if $\dim Sec^{e+1}C_{d+e}=2e+3\leq d+e-1$, which is true, as we are assuming $s=d-e-1\geq 3$.\end{rmk}
Now we can state and prove the main result of this section.
\begin{thm}\label{prop:quadricsclassif} Let us assume that $C$ is a non-degenerate irreducible smooth rational curve of degree  $d\geq s+1$ and parametrization map $f=\pi_T\circ \nu_d:\PP^1\to C\subset \PP^s$.  Then the following facts are equivalent.
 \begin{enumerate} \item The vertex $T$ is of type $(e)$, i.e. $T=\partial^e(g)$ with $[g]\in \PP(S^{d+e}U)\setminus Sec^{e+1}C_{d+e}$. 
 \item $\T_f= \OO_{\PP^1}(d+2+e)\oplus\OO_{\PP^1}^{s-1}(d+1)$.
  \item $C$ is contained in a smooth rational normal scroll $S\cong\mathbf{P}(\mathcal{E})\subset \PP^s$, with $\mathcal{E}=\OO_{\PP^1}(\alpha)\oplus\OO_{\PP^1}(\beta)$, $\alpha,\beta>0$, $\alpha+\beta=s-1$. 
\end{enumerate}
Moreover, under any of the conditions above, the following facts also hold.
\begin{enumerate}
\item[\it (i)] The rational normal scroll containing $C$ is uniquely determined by $C$. 
\item[\it (ii)] The normal bundle $\N_f$ has splitting type
$\N_f\cong \OO_{\PP^1}^2(d+e+3)\oplus \OO_{\PP^1}^{s-3}(d+2)$.
\end{enumerate}
  \end{thm}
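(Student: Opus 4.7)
The plan has four parts; part~(ii) is simply a restatement of Proposition~\ref{prop:specialnb} (note $d-e-4=s-3$), so I will focus on the equivalences.

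For $(1)\Leftrightarrow(2)$ I will just invoke Proposition~\ref{prop:tangristr}: if $T$ has numerical type $(b_1,\ldots,b_r)$, then $\T_f=\bigoplus_{i=1}^r \OO(b_i+d+2)\oplus \OO^{s-r}(d+1)$, and matching with the splitting in~(2) forces $r=1$, $b_1=e$. By Proposition~\ref{thm:main1} this means $T=\partial^e(g)$ with $g\in S^{d+e}U$, and Lemma~\ref{lm:smoothtypee} then identifies the smoothness of $C$ with $[g]\notin Sec^{e+1}C_{d+e}$.

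For $(1)\Rightarrow(3)$ together with the uniqueness~(i), my plan is to construct the scroll from the canonical maximum-degree summand $\OO(d+e+2)\subset \T_f$, which by Theorem~\ref{thm:mainnow} corresponds to the tensor $p_e(g)\in \ker D\cap (S^eU\otimes T)$. Via the Euler sequence $0\to \OO\to V\otimes \OO(d)\to \T_f\to 0$ I will take its preimage, a rank~2 subbundle $\mathcal{F}\subset V\otimes \OO(d)$ in an extension $0\to \OO\to \mathcal{F}\to \OO(d+e+2)\to 0$, and twist to $\mathcal{A}:=\mathcal{F}\otimes \OO(-d)\subset V\otimes \OO_{\PP^1}$. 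The natural map $\mathbf{P}(\mathcal{A})\to \PP(V)=\PP^s$ will have image a ruled surface $S$ containing $C$ (because the Euler section $[l]\mapsto l^d\bmod T$ factors through $\mathcal{F}$). A direct degree computation gives $\deg\mathcal{A}=-(s-1)$, so writing $\mathcal{A}=\OO(-\alpha)\oplus \OO(-\beta)$ with $\alpha+\beta=s-1$, the scroll will be $\mathbf{P}(\OO(\alpha)\oplus \OO(\beta))$ of degree $s-1$. I will then verify $\alpha,\beta>0$ by contradiction: a trivial summand $\OO\subset \mathcal{A}$ would give a constant vector $v\in V$ lying in every fiber $\mathcal{A}_l$, hence a common point $[v]\in \PP^s$ on all rulings, forcing $S$ to be a cone; this, unpacked in terms of $g$, should conflict with the hypothesis $[g]\notin Sec^{e+1}C_{d+e}$. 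Uniqueness~(i) will then be automatic because $\OO(d+e+2)$ is the unique maximum-degree direct summand of $\T_f$, and thus determines $\mathcal{F}$, $\mathcal{A}$ and hence $S$ canonically. The hardest part of the whole proof will be this step, in particular the verification that $\mathcal{A}$ has both summands of strictly positive degree, as this encodes the smoothness of the scroll.

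For $(3)\Rightarrow(1)$, starting from $C\subset S=\mathbf{P}(\OO(\alpha)\oplus \OO(\beta))$ with $\alpha,\beta>0$, the first step is a case analysis in $\mathrm{Pic}(S)=\Z H\oplus \Z R$ using the genus formula, which should show that a smooth irreducible rational curve of degree $d\geq s+1$ on such an $S$ must be a section of the ruling $\pi:S\to \PP^1$ (up to interchange of rulings when $\alpha=\beta$). The relative tangent bundle $\T_{S/\PP^1}=\OO_S(2)\otimes \pi^*\OO(-(s-1))$ then pulls back via $f$ to a line bundle of degree $2d-(s-1)=d+e+2$, and the inclusions $\T_{S/\PP^1}\hookrightarrow \T_S\hookrightarrow \T_{\PP^s}|_S$ will yield a line subbundle $f^*\T_{S/\PP^1}\hookrightarrow \T_f$ of that degree. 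Hence $\T_f$ carries a summand of degree at least $d+e+2$, and together with $\dim T=e+1$ Proposition~\ref{prop:tangristr} will force $r=1$, $b_1=e$, i.e., $T$ of type~$(e)$.
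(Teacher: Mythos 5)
Your plan follows the same overall architecture as the paper's proof: (1)$\Leftrightarrow$(2) via Proposition \ref{prop:tangristr} plus Lemma \ref{lm:smoothtypee}; the scroll built as the rank-two preimage of the maximal-degree summand of $\T_f$ inside the Euler sequence (your $\mathcal{A}=\mathcal{F}\otimes\OO(-d)$ is precisely the paper's $\mathcal{E}^\ast$, defined as the preimage of $\OO_{\PP^1}(e+2)$ in $V^\ast\otimes\OO_{\PP^1}$, with the same degree count $\deg\mathcal{A}=-(s-1)$); uniqueness (i) from the fact that $h^0\T_f(-d-e-2)=1$ makes the maximal line subbundle canonical; and (ii) quoted from Proposition \ref{prop:specialnb}. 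One place where you genuinely improve on the paper: in (3)$\Rightarrow$(1) you explicitly prove, via the genus formula on $\operatorname{Pic}(S)$, that a smooth irreducible rational curve of degree $d\geq s+1$ on the scroll must be a section of the ruling. The paper merely asserts this, and the step is really needed (the paper's own closing example of a $(2,3)$ curve on a quadric shows it fails without smoothness). Your use of $f^\ast\T_{S/\PP^1}$, of degree $C^2=2d-s+1=d+e+2$, as a line subbundle of $\T_f$ is then an equivalent repackaging of the paper's dualization of $\ker(\mathcal{E}\to\OO_{\PP^1}(d))\cong\OO_{\PP^1}(-e-2)$; both conclude by the same kind of degree bookkeeping.

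The one genuine gap is the exclusion of $\alpha\beta=0$ in (1)$\Rightarrow$(3), which you yourself flag as the hardest step but leave unexecuted. Your reduction is right (a trivial summand of $\mathcal{A}$ gives a point common to all rulings, so $S$ is a cone), but the plan to ``unpack this in terms of $g$'' and contradict $[g]\notin Sec^{e+1}C_{d+e}$ is the roundabout route: by Lemma \ref{lm:smoothtypee} that hypothesis is just smoothness of $C$ in disguise, so you should contradict smoothness directly, as the paper does. Concretely: if $S$ is a cone over a rational normal curve in $\PP^{s-1}$, the proper transform $\widetilde{C}\equiv C_0+dF$ on $\mathbf{F}_{s-1}$ meets the contracted $(1-s)$-curve $C_0$ with multiplicity $\widetilde{C}\cdot C_0=d+1-s\geq 2$, so $C$ would have a point of multiplicity at least $2$ at the vertex. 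Two smaller points to tighten: you should verify that $\mathcal{A}\subset V\otimes\OO_{\PP^1}$ is a subbundle and not merely a subsheaf before forming $\mathbf{P}(\mathcal{A})\to\PP^s$ (this follows from the snake lemma on your diagram, which identifies the quotient as $\OO_{\PP^1}^{s-1}(1)$, hence locally free); and in (i), ``the unique maximum-degree direct summand'' should be phrased as the unique maximal-degree line subbundle, i.e.\ the image of the one-dimensional $\operatorname{Hom}(\OO_{\PP^1}(d+e+2),\T_f)$, since direct-sum decompositions themselves are not unique.
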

 \begin{proof} 
 (1)$\iff$(2). By Proposition \ref{prop:tangristr} one sees that $T$ has type $(e)$, i.e. $T=\partial^e(g)$ with $[g]\not\in Sec^eC_{d+e}$ if and only if 
 $\T_f=\OO_{\PP^1}(d+2+e)\oplus\OO_{\PP^1}^{s-1}(d+1)$. Since we are assuming $C$ smooth, by Lemma \ref{lm:smoothtypee} one actually has  $[g]\not\in Sec^{e+1}C_{d+e}$.
 \vskip2mm
 \paragraph{(2)$\Rightarrow$(3)}
 We set $V=T^\perp$ and recall the restricted Euler sequence appearing in the second column of the diagram of section 4.1:
 $$0\to  \OO_{\PP^1}\to V^\ast\otimes  \OO_{\PP^1}(d)\to \T_f\to 0.$$ From this sequence and the existence of the sub-line bundle $\OO_{\PP^1}(d+2+e)\to \T_f$ we deduce a commutative diagram with exact rows and columns
\[\begin{CD}0@>>>  \OO_{\PP^1}(-d)@>>> \mathcal{E}^\ast@>>>  \OO_{\PP^1}(e+2)@>>>0\\
&&  @VV\cong V @VVV @VVV\\
0@>>>  \OO_{\PP^1}(-d)@>>> V^\ast\otimes  \OO_{\PP^1}@>>>\T_f(-d)@>>> 0\\
&&&& @VVV @VVV\\
&&&& \OO_{\PP^1}^{s-1}(1)@>\cong>>\OO_{\PP^1}^{s-1}(1)\end{CD}\]
where $\mathcal{E}^\ast$ is defined as the preimage of $\OO_{\PP^1}(e+2)$ in $V^\ast\otimes \OO_{\PP^1}$.  Dually, we get a exact sequence $0\to\OO_{\PP^1}^{s-1}(-1)\to V\otimes \OO_{\PP^1}\longrightarrow \mathcal{E}\to 0$.  It immediately follows that $\mathcal{E}$ has splitting type $\mathcal{E}\cong \OO_{\PP^1}(\alpha)\oplus\OO_{\PP^1}(\beta)$ with $\alpha,\beta\geq 0$ and  $\alpha+\beta=s-1$. Moreover, the sheaf map $V\otimes \OO_{\PP^1}\to\OO_{\PP^1}(d)$ that is naturally associated to $f$ is the composition of the sheaf epimorphisms  $V\otimes \OO_{\PP^1}\to \mathcal{E}\to \OO_{\PP^1}(d)$.  Let us denote $Y=\mathbf{P}(\mathcal{E})$. Then the sheaf epimorphism $V\otimes \OO_{\PP^1}\to \mathcal{E}$ provides a map $Y\to \PP^s$, whose image $S$ is a ruled surface of minimal degree $s-1$, and the existence of the factorization $V\otimes \OO_{\PP^1}\to \mathcal{E}\to \OO_{\PP^1}(d)$ shows that the curve $C$ is contained in $S$ as the image of a section $\widetilde{C}$ of the $\PP^1$ bundle $Y\to\PP^1$. We have only to show that $\alpha,\beta>0$. Indeed if for example $\alpha=0$ and $\beta=s-1$, then $S$ is a  cone  over a rational normal curve in $\PP^{s-1}$,  more precisely the map $Y\to S$ contracts the unique curve  $C_0$ of $Y$ with $C_0^2=1-s$ to the vertex of the cone $S$. In this case the section $\widetilde{C}\subset Y$ has divisor class $\widetilde{C}\equiv C_0+dF$, with $F$ a fibre of $Y\to\PP^1$, and intersection number $\widetilde{C}\cdot C_0=d+1-s\geq 2$ for $d\geq s+1$. Hence $C$ cannot be smooth for $d\geq s+1$. This argument excludes the case of the cone, therefore $\mathcal{E}=\OO_{\PP^1}(\alpha)\oplus\OO_{\PP^1}(\beta)$, with $\alpha+\beta=s-1$ and $\alpha,\beta>0$. In this case one also sees that the map $Y\to\PP^1$ is an embedding, i.e. $Y\cong S$, so $S$ is a smooth rational normal scroll.
 \vskip2mm
\paragraph{$(3)\Rightarrow (2)$} Assume that $C\subset S\subset \PP^s$, with $S$ a smooth rational normal scroll. In particular $S$ is isomorphic to a rational ruled surface $\mathbf{P}(\mathcal{E})$, embedded in $\PP^s$ by means of a surjection of vector bundles $V\otimes \OO_{\PP^1}\to \mathcal{E}$. The fact that $\deg S=s-1$ is equivalent to $\deg \mathcal{E}=s-1$. The fact that $C\subset S\cong \mathbf{P}(\mathcal{E})$ is a section of the projection map $\mathbf{P}(\mathcal{E})\to \PP^1$ implies the existence of a sheaf epimorphism $\mathcal{E}\to\OO_{\PP^1}(d)$ such that the epimorphism $V\otimes\OO_{\PP^1}\to\OO_{\PP^1}(d)$ associated to the embedding $C\subset\PP^s$ factorizes as $V\otimes \OO_{\PP^1}\to \mathcal{E}\to \OO_{\PP^1}(d)$. 
Setting $\mathcal{L}=\ker(\mathcal{E}\to\OO_{\PP^1}(d))$, we see that $\mathcal{L}\cong \OO_{\PP^1}(s-1-d)=\OO_{\PP^1}(-e-2)$. Now we can dualize all the sheaf morphisms that we have introduced so far, obtaining a diagram of the form
\begin{equation}\label{eq:diagram3-->2}\begin{CD}0@>>>  \OO_{\PP^1}(-d)@>>> \mathcal{E}^\ast@>>>  \OO_{\PP^1}(e+2)@>>>0\\
&&  @VV\cong V @VVV @VVV\\
0@>>>  \OO_{\PP^1}(-d)@>>> V^\ast\otimes  \OO_{\PP^1}@>>>\T_f(-d)@>>> 0,\end{CD}\end{equation}
i.e. we have obtained a sheaf embedding $\OO_{\PP^1}(d+e+2)\to\T_f$. Since $\deg \T_f=(s+1)d=(s-1)(d+1)+d+e+2$ and the degree of any summand $\OO_{\PP^1}(\delta)$ in a splitting of $\T_f$ is at least $d+1$, we can conclude that $\T_f$ has the form stated in (2).  
\vskip2mm
\paragraph{\em Proof of (i)}
After fixing homogeneous coordinates on $\PP^s$,  the last row of the diagram (\ref{eq:diagram3-->2}) is uniquely determined by the parametrization map $f:\PP^1\to\PP^s$, since this map defines uniquely the sheaf embedding $\OO_{\PP^1}(-d)\to V^\ast\otimes\OO_{\PP^1}$. Hence it is determined by $C$ up to the action of $PGL(2)=\operatorname{Aut}(\PP^1)$. Moreover there exists only one sheaf embedding $\OO_{\PP^1}(e+2)\to\T_f(-d)$ for the given splitting type $\T_f= \OO_{\PP^1}(d+e+2)\oplus\OO_{\PP^1}^{s-1}(d+1)$.  Hence also the sheaf embedding $\mathcal{E}^\ast\to V^\ast\otimes\OO_{\PP^1}$ in the diagram (\ref{eq:diagram3-->2}) is uniquely determined by $C$ up to the action of $PGL(2)$ on $\PP^1$. This means that
 the parametrization map $\PP(\mathcal{E})\to S\subset \PP^s$ is uniquely determined by $C$, up to the (equivariant) action of $PGL(2)$ on $\PP(\mathcal{E})\to \PP^1$. Hence $S$ is uniquely determined by $C$.  
 \vskip2mm
\paragraph{\em Proof of (ii)}
The stated formula for the splitting type of $\N_f$ is an immediate consequence of Proposition \ref{prop:specialnb}.
 \end{proof} 
\begin{rmk}There is a classical connection between the property of a non-degenerate irreducible curve $C$ of sufficiently high degree of being contained in a rational normal scroll and the number of independent quadric hypersurfaces containing $C$. Indeed one has the following result, essentially due  to Castelnuovo.\end{rmk}

 \begin{prop}  A non-degenerate and irreducible curve $C\subset\PP^s$ of degree $d\geq 2s+1$ has $h^0\II_C(2)\leq (s-1)(s-2)/2$. If in addition $C$ is smooth and rational, the equality holds if and only if $C$ is contained in a smooth rational normal scroll of dimension $2$. \end{prop}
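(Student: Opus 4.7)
The plan is to follow the classical Castelnuovo strategy of restricting to a general hyperplane section and applying the lemma on points in uniform position, then to reconstruct the scroll as the locus swept out by the rational normal curves through the hyperplane sections of $C$.

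\medskip

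\emph{The inequality.} I would fix a general hyperplane $H\subset\PP^s$ and set $\Gamma=C\cap H\subset H\cong\PP^{s-1}$, a configuration of $d$ points. Non-degeneracy gives $h^0\II_C(1)=0$, so the exact sequence
$$0\to \II_C(1)\to\II_C(2)\to \II_{\Gamma/H}(2)\to 0$$
yields $h^0\II_C(2)\leq h^0\II_{\Gamma/H}(2)$. By the General Position Theorem the points of $\Gamma$ are in uniform position, and Castelnuovo's lemma guarantees that when $d\geq 2s-1$ such points impose the maximal number $2s-1$ of conditions on quadrics of $H$. This gives
$$h^0\II_{\Gamma/H}(2)=\binom{s+1}{2}-(2s-1)=\frac{(s-1)(s-2)}{2},$$
which is the desired bound. (Here the weaker hypothesis $d\geq 2s-1$ already suffices; the stronger hypothesis $d\geq 2s+1$ will only be used for the equality discussion below.)

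\medskip

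\emph{The equality case.} For the easy direction, if $C$ lies on a smooth rational normal scroll $S\subset\PP^s$ of dimension $2$, then $S$ has degree $s-1$ and its homogeneous ideal is generated by the $2\times 2$ minors of a $2\times(s-1)$ catalecticant matrix of linear forms. These minors span a vector space of dimension $\binom{s-1}{2}=(s-1)(s-2)/2$, so $h^0\II_S(2)=(s-1)(s-2)/2$, and the inclusion $\II_S\subset\II_C$ combined with the previous step gives equality. For the converse, I would assume $C$ smooth rational with $h^0\II_C(2)=(s-1)(s-2)/2$. Then for a general $H$ the map $H^0\II_C(2)\to H^0\II_{\Gamma/H}(2)$ is an isomorphism and $\Gamma$ imposes only $2s-1$ conditions on quadrics; the sharper form of Castelnuovo's lemma — which requires $d\geq 2s+1$ — then asserts that $\Gamma$ lies on a \emph{unique} rational normal curve $R_H\subset H$ of degree $s-1$. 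Letting $H$ move, the curves $R_H$ fit together into an irreducible surface $S\subset\PP^s$ containing $C$, whose general hyperplane section has degree $s-1$. Hence $\deg S=s-1$ is the minimal degree for a non-degenerate surface in $\PP^s$, so $S$ is a rational normal scroll. Finally, since $d\geq s+1$ and $C$ is smooth, the cone-exclusion argument inside the proof of $(2)\Rightarrow (3)$ of Theorem \ref{prop:quadricsclassif} (noting that on a cone any section would meet the vertex line in $\geq 2$ points) forces $S$ to be smooth.

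\medskip

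\emph{Main obstacle.} The real content is the sharper Castelnuovo lemma applied in the equality case: namely, that $d\geq 2s+1$ points of $\Gamma\subset\PP^{s-1}$ in uniform position lying on $(s-1)(s-2)/2$ quadrics are necessarily contained in a unique rational normal curve. This is precisely where the arithmetic hypothesis $d\geq 2s+1$ (rather than the weaker $d\geq 2s-1$ needed for the inequality) becomes essential, since below that threshold uniqueness fails and the family of $R_H$'s need not globalize to a surface. Once this is accepted, the globalization of the $R_H$'s into a scroll $S$ is a standard construction, and the smoothness of $S$ follows from the already-established Theorem \ref{prop:quadricsclassif}.
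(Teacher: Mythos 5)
Your argument follows essentially the same route as the paper's: restrict to a general hyperplane section $\Gamma=C\cap H$, use Castelnuovo's counting to get the bound, and in the equality case apply the refined Castelnuovo lemma (valid since $\deg\Gamma\geq 2(s-1)+3=2s+1$) to place $\Gamma$ on a unique rational normal curve $R_H\subset H$, sweep out a surface $S$ of minimal degree $s-1$ containing $C$, and exclude the cone via the argument of Theorem \ref{prop:quadricsclassif}. One minor slip: the line $h^0\II_{\Gamma/H}(2)=s(s+1)/2-(2s-1)$ should be an inequality $\leq$, since Castelnuovo's lemma only guarantees that any $2s-1$ of the $d$ points impose independent conditions, so $\Gamma$ imposes \emph{at least} $2s-1$ conditions; the upper bound on $h^0\II_C(2)$ still follows, and in the equality case the count does become exact.

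The genuine gap is the step ``$\deg S=s-1$ is the minimal degree for a non-degenerate surface in $\PP^s$, so $S$ is a rational normal scroll.'' By the classification of surfaces of minimal degree, a non-degenerate irreducible surface of degree $s-1$ in $\PP^s$ is a rational normal scroll (possibly a cone) \emph{or} the Veronese surface $\nu_2(\PP^2)\subset\PP^5$ when $s=5$. Your construction of $S$ as the union of the curves $R_H$ does not rule the Veronese out: its general hyperplane section is itself a rational normal quartic in $\PP^4$, so it arises in exactly the same way. The paper closes this case by observing that a non-degenerate curve on the Veronese surface is the isomorphic image of a plane curve of degree at least $3$, which cannot be both smooth and rational; you need this (or an equivalent) argument before concluding that $S$ is a scroll. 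Everything else --- the easy direction via the $(s-1)(s-2)/2$ independent quadrics containing a scroll, and the smoothness of $S$ via the cone-exclusion step of Theorem \ref{prop:quadricsclassif} --- matches the paper.
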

 \begin{proof}[Sketch of proof]  Let $\Gamma=C\cap H$ be a general hyperplane section of $C$, which is in general linear position. Then, from the exact sequence $$0\to \II_C(1)\to\II_C(2)\to \II_{\Gamma,H}(2)\to 0,$$ one finds $h^0\II_C(2)\leq h^0\II_{\Gamma,H}(2)$. By a classical argument of Castelnuovo, any $2s-1$ points of $\Gamma$ impose independent conditions on the quadrics of $H\cong\PP^{s-1}$, hence $h^0\II_{\Gamma,H}(2)\leq h^0\OO_H(2)-2s+1=s(s+1)/2-2s+1=(s-1)(s-2)/2$, proving the stated inequality. 
 
 If the equality holds, then $\Gamma$ imposes exactly $2s-1$ conditions on the quadrics of $H\cong\PP^{s-1}$, and since $\deg H\geq 2s+1=2(s-1)+3$ one can apply Castelnuovo's lemma as in p. 531 of \cite{Gr-Ha}, and conclude that $\Gamma$ is contained in a unique rational normal curve of $\PP^{s-1}$. Hence, by the arguments in pp. 531-532 of \cite{Gr-Ha}, the curve $C$ is contained in a rational normal scroll or $s=5$ and $C$ is contained in a Veronese surface in $\PP^5$. When $C$ is a smooth rational curve, we can exclude that $S$ is the Veronese surface $\nu_2(\PP^2)\subset \PP^5$, because any non degenerate smooth curve  $C\subset S$ would come from a smooth curve of degree at least $3$ of $\PP^2$, hence it cannot be rational. Therefore we are left with the case of $S$ a rational normal scroll. As in the proof of the implication (2)$\Rightarrow$(3) of Theorem  \ref{prop:quadricsclassif}, it is easy to see that $S$ is smooth. 
The converse follows from the fact that a rational normal scroll $S\subset\PP^s$ is contained in $(s-1)(s-2)/2$ independent quadrics. \end{proof}
 We conclude this section with a discussion of the relevance of the smoothness assumption in Theorem \ref{prop:quadricsclassif}. Indeed one can see that the implication (3)$\Rightarrow$(2) of Theorem \ref{prop:quadricsclassif} is false if one does not assume $C$ to be smooth. To this purpose, one can find counterexamples already in $\PP^3$. This fact was not explicitly observed in \cite{Eis-VdV1}, where the case $s=3$ of Theorem \ref{prop:quadricsclassif} was proved. Here it is such an example.
 \begin{example}  Let us consider   $g:\PP^1\to\PP^1\times\PP^1$ defined by $$g(u,v)=(u^2:v^2;u^3:v^3)$$ and compose it with the Segre embedding $\PP^1\times\PP^1\to\PP^3$ so to obtain $f:\PP^1\to\PP^3$ defined by $$f(u,v)=(u^5:u^2v^3:v^2u^3:v^5).$$ This is a parametrization of a rational curve $C$ (with two cusps) of degree $5$ contained in the quadric $Q\subset\PP^3$ of equation $x_0x_3-x_1x_2=0$, which is a very simple rational normal scroll, therefore $C$ satisfies (3) of Theorem \ref{prop:quadricsclassif}. Note that $C$ is a curve of divisor class $(2,3)$ in $\PP^1\times\PP^1$, so $C$ is not a section of any of the two $\PP^1$ bundle structures $Q\to\PP^1$. We have, by construction, $$T^\perp=\langle  u^5,u^2v^3,v^2u^3,v^5\rangle.$$ One sees immediately that $T=\langle x^4y,xy^4\rangle$ and therefore $\partial T=\langle x^4,x^3y,xy^3,y^4\rangle$, so that $\dim\partial T=\dim T+2$. Hence, from Proposition \ref{thm:main1} and Definition \ref{def:type} one sees that $T$ has numerical type $(0,0)$ and by Proposition \ref{prop:tangristr} one finds \begin{equation}\label{eq:thm2cex}\T_f=\OO_{\PP^1}^2(7)\oplus\OO_{\PP^1}(6).\end{equation} This contradicts (2) of Theorem \ref{prop:quadricsclassif}.
 Observe that the curve $C$ has not ordinary singularities, but it can be deformed to a rational curve $C'\subset Q$ of divisor class $(2,3)$ with two nodes. Since the vertex $T$ relative to $C$ has numerical type $(0,0)$ and this is the {\em general} numerical type for subspaces $T\subset S^5U$ of $\dim T=2$, then the vertex $T'$ relative to $C'$ will have type $(0,0)$ as well. Hence the restricted tangent sheaf to $C'$ has splitting type as in formula  (\ref{eq:thm2cex}), providing a conterexample to (2) of Theorem \ref{prop:quadricsclassif} by means of a curve with ordinary singularities.  \end{example} 
 \begin{acknowledgment}
 We thank G. Ottaviani and F. Russo for many stimulating and helpful discussions during the development of this work. \end{acknowledgment}

\end{document}